\newtheorem{theorem}{Theorem}[section]
\newtheorem{proposition}[theorem]{Proposition}
\newtheorem{lemma}[theorem]{Lemma}
\theoremstyle{definition}    
\newtheorem{definition}[theorem]{Definition}
\theoremstyle{remark}
\newtheorem{remark}[theorem]{Remark}
\newtheorem{remarks}[theorem]{Remarks}
\newtheorem{example}[theorem]{Example}
\newtheorem{examples}[theorem]{Examples}
\renewcommand{\L}{\mathcal{L}}
\renewcommand{\O}{\mathcal{O}}
\newcommand{\ca}{\mathcal}
\newcommand{\F}{\mathcal{F}}
\newcommand{\R}{\mathbb{R}}
\newcommand{\SU}{\on{SU}}
\newcommand{\Z}{\mathbb{Z}}
\newcommand\pt{\on{pt}}
\newcommand\lie[1]{\mathfrak{#1}}
\renewcommand{\k}{\lie{k}}
\newcommand{\h}{\lie{h}}
\renewcommand{\a}{\mathsf{a}}
\newcommand{\on}{\operatorname}
\newcommand{\Ad}{ \on{Ad} }
\newcommand{\ad}{\on{ad}}
\newcommand{\Hol}{ \on{Hol} }
\renewcommand{\ker}{ \on{ker}}
\newcommand{\SO}{ \on{SO}}
\newcommand{\sz}{\mathsf{s}}
\newcommand{\tz}{\mathsf{t}}
\newcommand\qu{/\kern-.7ex/} % Categorical quotients
\newcommand{\lra}{\longrightarrow}
\newcommand{\ra}{\rightarrow}
\renewcommand{\d}{{\mbox{d}}}
\newcommand{\f}{\frac}
\newcommand{\p}{\partial}
\newcommand\hh{{\f{1}{2}}}
\newcommand{\ti}{\tilde}
\newcommand{\eeq}{\end{eqnarray*}}
\newcommand{\beq}{\begin{eqnarray*}}
\newcommand{\pr}{\on{pr}}
\newcommand{\mf}{\mathfrak}
\newcommand{\rra}{\rightrightarrows}
\renewcommand{\subset}{\subseteq}
\renewcommand{\supset}{\supseteq}
\newcommand{\wt}{\widetilde}
\newcommand{\Ra}{\Rightarrow}
\newcommand{\GL}{\on{GL}}
\newcommand{\sI}{\mathsf{I}}
\newcommand{\sU}{\mathsf{U}}
\begin{document}

%\sloppy
\title{On the integration of transitive Lie algebroids}

\author{Eckhard Meinrenken}
\address{Mathematics Department\\ University of Toronto\\40 St George Street\\Toronto, ON M5S2E4}
\email{mein@math.toronto.edu}
\begin{abstract}
We revisit the problem of integrating Lie algebroids $A\Rightarrow M$ to Lie groupoids $G\rra M$, for the special case 
that the Lie algebroid $A$ is \emph{transitive}.  We obtain a geometric explanation of the Crainic-Fernandes obstructions for this situation, and an explicit construction of the integration whenever these obstructions vanish. We also indicate an extension of this approach to regular Lie algebroids. 
\end{abstract}
\subjclass{58H05, 53D17}
\keywords{Lie algebroids, Lie groupoids, Poisson geometry, principal bundles}
\maketitle

\section{Introduction}
Lie's Third Theorem states that every finite-dimensional Lie algebra  admits an integration to a unique connected, simply connected Lie group. Lie himself had proved the integrability to \emph{local} Lie groups; the full version was obtained by \'{E}lie Cartan \cite{car:the,car:tro}. Classical textbook proofs 
obtain Lie's Third Theorem as a consequence of rather difficult structure theorems in Lie theory; a more elementary argument was given by Gorbatsevich \cite{gor:con}. 

Following the introduction of Lie algebroids by Pradines \cite{pra:th,pra:tro} in the late 1960s, the generalization of Lie's Third Theorem to the integration of Lie algebroids $A\Rightarrow M$ to Lie groupoids $G\rra M$ was an open question for many years. (See Section \ref{sec:basics} below for some background material on Lie algebroids and Lie groupoids.)
In special cases, such as tangent bundles to foliations, actions of Lie algebras on manifolds, or  smooth families of Lie algebras, the integrability was established through explicit constructions.  
A counter-example, disproving integrability in the general case, was given by Almeida and Molino \cite{alm:sui} in 1985. These authors observed that the Lie algebroid $TM\times \R\Ra M$, with bracket on sections
\[ [X+f,Y+g]=[X,Y]+L_X g-L_Y f+\sigma(X,Y)\]
defined by a closed 2-form $\sigma\in \Omega^2(M)$, cannot integrate to 
a Lie groupoid unless the subgroup of spherical periods 
\[ \Lambda= \big\{\int_{S^2}f^*\sigma\big|\  f\colon S^2\to M\big\}\subset \R\] 
is discrete.  Using a \v{C}ech-theoretic argument, Mackenzie \cite{mac:gen} generalized this obstruction to more general transitive Lie algebroids (that is, having a surjective anchor map $\a\colon A\to TM$). For cotangent Lie algebroids of Poisson manifolds $(M,\pi)$, Catteneo-Felder \cite{cat:poi} used an infinite-dimensional symplectic quotient to 
construct a topological groupoid which, when smooth, would be a symplectic groupoid integrating $T^*M\Ra M$.  
In his talk at the conference `Poisson 2000' in Luminy, \v{S}evera \cite{sev:so} proposed a topological groupoid $G\rra M$ associated to arbitrary Lie algebroids $A\Ra M$, as the space of Lie algebroid paths $\gamma\colon \sI\to A$ modulo Lie algebroid homotopies, but did not make precise under what conditions $G$ is smooth. Independently, a similar idea was suggested by Weinstein  (see comments in \cite{cra:lect}), motivated by the new proof of Lie's Third Theorem by Duistermaat-Kolk \cite{du:li}. The full picture was obtained by Crainic-Fernandes  \cite{cra:intlie}, who gave a careful discussion of the smoothness properties of the topological groupoid $G\rra M$, introduced the so-called \emph{monodromy groups}, and 
proved  that uniform discreteness of these groups is both necessary and sufficient for smoothness.

The construction of $G\rra M$ as the   space of Lie algebroid paths modulo Lie algebroid homotopy is conceptually satisfying, but can be hard to work with in practice, due to technical difficulties with  infinite-dimensional manifolds, and since the concept of Lie algebroid homotopy is somewhat involved.  It is the purpose of this note to show that in the special case of transitive Lie algebroids, a rather elementary proof of the Crainic-Fernandes  theorem is available. 
Our construction of an integration is quite direct, and is more geometric than that given in Mackenzie's work \cite{mac:gen}. The basic idea is to adapt the re-construction of a principal bundle with connection  from its parallel transport (holonomies) \cite{bar:hol,cae:fam,col:par,sch:par}. 
%The construction involves the \emph{path groupoid} $\on{Path}(M)\rra M$, 
%of thin homotopy classes of paths in $M$. While this is still infinite-dimensional,  the description of the manifold 
%structure on $G\rra M$ will be  just a mild extension of the argument showing that the fundamental groupoid $\Pi(M)$ is a manifold. (One can also use the diffeological structure of the path groupoid, but we will not to use this technology here.) 

Let us give an overview of the construction. We start with a geometric interpretation of the Crainic-Fernandes monodromy groups. Recall that the clutching construction identifies the isomorphism classes of principal $K$-bundles over 2-spheres with elements of the fundamental group $\pi_1(K,e)$. We show that, similarly, transitive Lie algebroids over $S^2$, with structure Lie algebra $\k$, 
are classified by elements of  $\on{Cent}(\wt{K})$, 
the center of the connected, simply connected Lie group  integrating $\k$. Given a transitive Lie algebroid $A\Ra M$, 
with isotropy algebras $\h_m,\ m\in M$, we obtain the monodromy groups $\Lambda_m\subset \on{Cent}(\wt{H}_m)$ as the set of elements classifying the pullback Lie algebroids $f^!A\Ra S^2$, for all maps $f\colon S^2\to M$ taking the base point 
$m_0\in S^2$ 
to $m$. The same argument as  in \cite{alm:sui} shows that discreteness of $\Lambda_m$ is necessary for existence of an integration. 
Conversely, if the monodromy groups are  discrete, a quotient Lie group bundle \[ \mathsf{U}=\wt{H}/\Lambda=\bigcup_{m\in M} \wt{H}_m/\Lambda_m\] is well-defined. A splitting $j\colon TM\to A$ 
(right inverse to the anchor) defines a connection on this Lie group bundle. Equivalently,  via parallel transport, this gives an action of the \emph{path groupoid} 
 $\on{Path}(M)\rra M$ (see Section \ref{subsec:pathgroupoid})  on $\mathsf{U}$, compatible with the   
group multiplication, and hence a 
semi-direct product groupoid 
\[ \on{Path}(M)\ltimes \mathsf{U}\rra M.\]
The  isotropy groups of the path groupoid $\on{Path}(M)$ are thin homotopy classes 
of loops in $M$; they assemble into a group bundle  $\on{Loop}(M)$. Let $\on{Loop}^0(M)$  be the normal 
subgroup bundle  of \emph{contractible loops}. Then $\Pi(M)= \on{Path}(M)/\on{Loop}^0(M)\rra M$ is the \emph{fundamental groupoid}, consisting of homotopy classes of paths. 
We will describe a canonical group bundle morphism $\on{Loop}^0(M)\to \mathsf{U}$, so that $\on{Loop}^0(M)$ becomes a normal subgroupoid of the semi-direct product $\on{Path}(M)\ltimes \mathsf{U}$. 
\medskip

\noindent{\bf Theorem.} The 
	quotient 
\[ 	G=(\on{Path}(M)\ltimes \mathsf{U})/\on{Loop}^0(M)\rra M\]
is the source-simply connected Lie groupoid integrating $A\Ra M$.	
\medskip

The description of the manifold structure on $G$ in the general case is analogous to that of the fundamental groupoid, and makes $G$ into a fiber bundle over  $\Pi(M)$ with fibers $\mathsf{U}_m$. 

The method extends to \emph{regular} Lie algebroids. By definition, the anchor map of a regular Lie algebroid 
has constant rank, and so defines a regular foliation $\ca{F}$ of $M$. Here, one needs the stronger condition of \emph{locally uniform discreteness} of the monodromy groups. With these assumptions, the source-simply connected Lie groupoid integrating $A$ has a similar description as above, replacing the path groupoid with a groupoid of thin homotopy classes of $\ca{F}$-paths, and $\on{Loop}^0(M)$ with the corresponding bundle of contractible $\ca{F}$-loops. 
Here $G\rra M$ is a  non-Hausdorff Lie groupoid, in general.

Note that for general Lie algebroids, there is an elegant construction of integrations to \emph{local Lie groupoids} using Lie algebroid sprays, due to  Cabrera-Marcut-Salzar
\cite{cab:loc} (generalizing an approach of Crainic-Marcut \cite{cra:exi} for the Poisson case). This construction, providing integrations `near the units', is in some sense orthogonal to the one given here.  It would be interesting to find a way of combining the two constructions. 

 \medskip
\noindent  {\bf Acknowledgement.} It is a pleasure to thank Henrique Bursztyn, as well as the referee, for a number of helpful comments.

 \section{Reminders on Lie groupoids and Lie algebroids}\label{sec:basics}
 We review the basic definitions for Lie algebroids and Lie groupoids; for details see, e.g., 
  \cite{cra:lect,duf:po,mac:gen}.
 \subsection{Lie groupoids}
 A Lie groupoid is a manifold $G$ of  \emph{arrows}, together with a submanifold $M\subset G$ of \emph{units}, surjective submersions $\sz,\tz\colon G\to M$ called \emph{source} and \emph{target}, and a smooth multiplication map 
 from the submanifold 
  $G^{(2)}=\{(g',g)\in G^2|\ \sz(g')=\tz(g)\}$ of \emph{2-arrows} to $G$, denoted $(g',g)\mapsto g'g\equiv g'\circ g$, 
 with $\sz(g'g)=\sz(g),\ \tz(g'g)=\tz(g')$.   
 We picture elements $g\in G$ as arrows from the source $\sz(g)=m$ to the target $\tz(g)=m'$, 
 \[\begin{tikzcd}  m' & m^{\phantom{'}}\arrow[l,bend angle=60,bend right, "g"']\end{tikzcd}\]
 and the groupoid multiplication as a concatenation of arrows,  
 \[\begin{tikzcd} m''& m' \arrow[l,bend angle=60,bend right, "g' "']& m^{\phantom{'}}\arrow[l,bend angle=60,bend right, "g"']\end{tikzcd}
 \ \ \begin{tikzcd} \leadsto \\ \phantom{M} \end{tikzcd}\ \ \begin{tikzcd}  m'' & m^{\phantom{'}}\arrow[l,bend angle=60,bend right, "g'g"']\end{tikzcd}
 \]
The groupoid multiplication is required to satisfy axioms of \emph{associativity} $(g''g')g=g''(g'g)$, 
\emph{units} $g\circ \sz(g)=g=\tz(g)\circ g$, and existence of \emph{inverses} $g^{-1}$, that is,  $g\circ g^{-1}=\tz(g)$ and $g^{-1}\circ g=\sz(g)$. Applications to foliation theory dictate that one should allow for the total space of $G$ to be non-Hausdorff, but one requires that the space of units $M$ as well as all source and target fibers are Hausdorff.  
\begin{examples}
	\begin{enumerate}
		\item  Lie groupoids with unit space $M=\on{pt}$ are just Lie groups $K\rra \pt$. More generally, 
		Lie groupoids with $\sz=\tz$ are smooth families of Lie groups $\bigcup_{m\in M} K_m$. (The union can be 
		non-Hausdorff.) 
		Note that the Lie groups $K_m$  need not be isomorphic (or even diffeomorphic), hence the family of Lie groups  is not necessarily a `Lie group bundle'. 	
		\item The \emph{pair groupoid} $\on{Pair}(M)\rra M$ consists of pairs $(m',m)\in M\times M$. The source and target maps are  
		$\sz(m',m)=m,\ \tz(m',m)=m'$, and the multiplication reads as $(m'',m')\circ (m',m)=(m'',m)$. Closely related is the \emph{homotopy groupoid} $\Pi(M)\rra M$, consisting of homotopy classes of paths $\gamma\colon \sI\to M$ from $\sz([\gamma])=\gamma(0)$ to $\tz([\gamma])=\gamma(1)$. 
		\item Given a Lie group $K$ with an action on a manifold $M$, one has the \emph{action groupoid} $G=K\times M\rra M$, where $(k,m)$ is viewed as an arrow from $m$ to $k\cdot m$. 
		\item  A foliation $\ca{F}$ of a manifold defines a \emph{monodromy groupoid}, consisting of homotopy classes of paths in leaves, and a \emph{holonomy groupoid} consisting of holonomy classes of such paths (see e.g. \cite{moe:fol}). 	For interesting foliations, these are often non-Hausdorff. 
	\end{enumerate}
\end{examples}
Morphisms of Lie groupoids are smooth maps preserving all the structures. For example, the inclusion of units 
$M\to G$ is a morphism, and the 
\emph{Lie groupoid anchor} 
\[ \a_G=(\tz,\sz)\colon G\to \on{Pair}(M)\]
is a morphism. 

We shall need some basic constructions with Lie groupoids: products, pull-backs, and quotients. Products $G'\times G\rra M'\times M$ of Lie groupoids are defined in an obvious way.  
Given a Lie groupoid $G\rra M$ and a smooth map $\varphi\colon Q\to M$ such that the induced map of pair groupoids 
$\on{Pair}(Q)\to \on{Pair}(M)$
is transverse 
to $\a_G\colon G\to \on{Pair}(M)$, one  has a \emph{pullback Lie groupoid}
 \[ \varphi^!G\rra Q\] 
 given as a fiber product 
 $\on{Pair}(Q)\times_{\on{Pair}(M)}G$, with groupoid structure as a subgroupoid of $\on{Pair}(Q)\times G$. 
 (More generally, one can replace transversality with a clean intersection condition.) 
 For quotients of $G\rra M$, we shall only consider subgroupoids that are \emph{wide} in the sense that their space of 
 units is all of $M$. Given a  wide closed Lie subgroupoid $L\rra M$, contained in $\ker(\a_G)$ and \emph{normal}
 in the sense that $glg^{-1}\in L$ for all composable $g\in G,\ l\in L$, the 
 \emph{quotient groupoid} $G/L\rra M$ is the set of equivalence classes under the relation $g'\sim g\ \Leftrightarrow\ 
 g'g^{-1}\in L$. 
  There are much more general quotient constructions where the space of units changes as well (see 
 Mackenzie \cite{mac:gen}), but the setting considered here is enough for our purposes.

 \subsection{Lie algebroids}
 We use the notation $A\Ra M$ for Lie algebroids; this notation (which we learned from \cite{bur:vec}) 
 is meant to suggest a `limit' of a Lie groupoid  $G\rra M$ when  source and target become `infinitesimally close'. 
 A Lie algebroid is a vector bundle $A\to M$ with a Lie bracket on its space of sections, together with a bundle map  
  \[ \a_A\colon A\to TM\]
  called the Lie algebroid anchor, such that the Leibnitz rule $[\sigma,f\tau]=f[\sigma,\tau]+(\L_{\a(\sigma)}f)\,\tau$ holds for all sections $\sigma,\tau$ and all functions $f\in C^\infty(M)$.   
  
 \begin{examples}
 	\begin{enumerate}
 		\item Lie algebroids over $M=\pt$ are Lie algebras $\k$. More generally, a Lie algebroid with zero anchor is a smooth family of Lie algebras $\bigcup_{m\in M}\k_m$.
 		\item The tangent bundle $TM$ is a Lie algebroid, with anchor the identity.  
 		\item Given a Lie algebra action of $\k$ on $M$, one has the action Lie algebroid $\k\times M\Ra  M$, with 
 		anchor the bundle map to $TM$ given by the infinitesimal action. 
 		\item Given a foliation $\ca{F}$ of a manifold, one has the tangent bundle $T_\F M$ to the foliation. 
 	\end{enumerate}
 \end{examples} 
   Given two Lie algebroids $A'\Ra M',\ A\Ra M$, there is a unique \emph{product Lie algebroid} 
 $A'\times A\Ra M'\times M$ such that the map $\Gamma(A')\oplus \Gamma(A)\to \Gamma(A'\times A),\ 
 (\sigma',\sigma)\mapsto \pr_1^*\sigma'+\pr_2^*\sigma$ preserves Lie brackets.  
   If $A\Ra M$ is a Lie algebroid, and $\varphi\colon Q\to M$ is a smooth map whose tangent map is transverse to the anchor $\a_A\colon A\to TM$, one has a \emph{pull-back Lie algebroid} 
 \[ \varphi^!A\Ra Q\] 
 given as the fiber product $TQ\times_{TM} A$;  the Lie algebroid structure comes from its inclusion as a subalgebroid of $TQ\times A$. Given a wide subbundle $\mf{l}\subset A$, contained in $\ker(\a_A)$ and such that the space of sections is an ideal in $\Gamma(A)$, one obtains a \emph{quotient Lie algebroid} $A/\mf{l}$. 
 Morphisms of Lie algebroids are vector bundle maps $\varphi\colon A'\to A$ whose graph is a Lie subalgebroid  
 $\on{Graph}(\varphi)\subset A\times A'$. In particular, the anchor $\a_A\colon A\to TM$ is a morphism of Lie algebroids.

 \subsection{Lie functor}
 The Lie functor takes Lie groupoids $G\rra M$ to Lie algebroids $\on{Lie}(G)\Ra M$, and morphisms of Lie groupoids to morphisms of Lie algebroids.   As a vector bundle, $\on{Lie}(G)$ is the normal bundle of $M$ inside $G$,
 \[\on{Lie}(G)=\nu(G,M),\]  The bracket on sections is induced from the identification of $\Gamma(A)$  with left-invariant vector fields on $G$ (tangent to $\tz$-fibers). 
 The Lie algebroid anchor $\a_{\on{Lie}(G)}$ is obtained by applying the normal functor to the Lie groupoid anchor $\a_G\colon G\to \on{Pair}(M)$, and identifying $\nu(\on{Pair}(M),M)=TM$ by projection to the first factor.  For the 
  pair groupoid, this gives $\on{Lie}(\on{Pair}(M))=TM$ with the standard Lie 
 bracket of vector fields.  Given a morphism of Lie groupoids $G_1\to G_2$, the corresponding morphism of Lie algebroids is again given by the normal bundle functor to the map of pairs $(G_1,M_1)\to (G_2,M_2)$.

 \begin{definition}
 A Lie algebroid $A\Ra M$ is called \emph{integrable} if there exists a Lie groupoid $G\rra M$ (possibly non-Hausdorff) such that $A\cong \on{Lie}(G)$. 
 \end{definition}If an integration exists, then it may be chosen to have connected and simply connected  source fibers. As a consequence of Lie's Second Theorem for Lie groupoids  (proved by Mackenzie-Xu,\cite[Theorem A.1]{mac:int}) such an integration is then unique. See \cite{cra:lect} for further background on the integration problem, and 
 \cite{hoy:hau} for a detailed discussion of Hausdorff aspects.
 \begin{enumerate}
 	\item Finite-dimensional Lie algebras are integrable by Lie's Third Theorem. The generalization to smooth families of Lie algebras is due to Douady-Lazard \cite{dou:esp}. The paper \cite{dou:esp} also gives an example showing that the integration of such a smooth family may fail to be Hausdorff; see Example \ref{ex:douady} below. 
 	\item Both $\on{Pair}(M)$ and $\Pi(M)$ integrate $TM\Ra M$.
 	\item  	The action Lie algebroid $\k\times M\Ra  M$ for an action of a Lie algebra $\k$ is always integrable,  
 	as shown by Dazord \cite{daz:int}. Note that the integration need not be an action groupoid for a Lie group action, due to possibly incomplete generating vector fields.  
 	\item The monodromy groupoid and the holonomy groupoid of a foliation $\F$ are both integrations of 
 	 the tangent bundle $T_\F M\Ra M$ to the foliation. 
 \end{enumerate}
 For any Lie algebroid $A\Ra M$, the anchor map defines a singular foliation given by the submodule of the $C^\infty(M)$-module of vector fields, $\a(\Gamma(A))\subset \mf{X}(M)$. The leaves of this singular foliation 
 are  called the \emph{orbits} of $A\Ra M$. 
 For any orbit $i\colon \O\to M$, the restriction (i.e, 
 pullback as a vector bundle) is  a well-defined Lie algebroid $A_\O\Ra \O$, and $A=\sqcup_\O A_\O$. 
 
 If $A\Ra M$ integrates to a source-connected Lie groupoid $G\rra M$, then  the leaves of this singular foliation coincide with the \emph{orbits}  $\O=\tz(\sz^{-1}(m))$ of $G$, and $G_\O:=\tz^{-1}(\O)\cap \sz^{-1}(\O)$ is a Lie groupoid integrating $A_\O$. The full Lie groupoid is the union $G=\sqcup_\O G_\O$. 
  To get a feeling for the integration problem, one might thus start with Lie algebroids and Lie groupoids having a  single orbit; this is the \emph{transitive case}.

 \section{Transitive Lie groupoids and Lie algebroids}
 \subsection{Transitive Lie groupoids}
 A Lie groupoid $G\rra M$ is called \emph{transitive} if it has a unique orbit: for any two elements $m,m'\in M$ there is an arrow $g$ from $m=\sz(g)$ to $m'=\tz(g)$. In other words, 
 the groupoid anchor $\a_G= (\tz,\sz)\colon G\to \on{Pair}(M)$ is surjective.  The anchor of a transitive Lie groupoid is automatically a submersion, and its 
  kernel $H=\ker(\a_G)$, given as  the pre-image of the units $M\subset \on{Pair}(M)$, is the bundle of \emph{isotropy Lie groups} 
 \[ H_m=\tz^{-1}(m)\cap \sz^{-1}(m);\]
 it fits into an exact sequence of Lie groupoids
 \[ 1\to H\to G\to \on{Pair}(M)\to 1.\]
% we have an exact sequence of Lie groupoids\[ 0\to \bigcup_{m\in M}H_m\to G\to \on{Pair}(M)\to 0.\]
 %
 Products, pullbacks, and quotients of transitive groupoids are again transitive. (Note that pullbacks of transitive Lie groupoids are defined for arbitrary smooth maps, since the transversality condition is automatically satisfied.) 
  
 \begin{example}[Gauge groupoids]
 Any  principal $K$-bundle $\pi\colon P\to M$, with action map $K\times P\to P,\ (k,p)\mapsto k\cdot p$, 
 defines a transitive Lie groupoid called the \emph{gauge groupoid} 
  \begin{equation}\label{eq:gaugegroupoid} G(P)=\on{Pair}(P)/K\rra M;\end{equation}
 here the quotient is by the diagonal $K$-action.  The pullback of a gauge groupoid under a smooth map $\varphi\colon Q\to M$ is given by  $\varphi^!G(P)=G(\varphi^* P)$, with the usual pullback of principal bundles $\varphi^*P=Q\times_{M} P$. 
% The gauge groupoid $G(P)$ comes with a natural groupoid action on the principal bundle, $G(P)\,_s\times_\pi P\to P$, equivariant for the $K$-action. This identifies the submanifolds $\tz^{-1}(m')\cap \sz^{-1}(m)\subset G(P)$  of arrows from $m$ to $m'$ with the  $K$-equivariant maps $P_m\to P_{m'}$.
 \end{example}
 
 It is an important fact that \emph{every} transitive Lie groupoid $G\rra M$ is isomorphic to a gauge groupoid. To see this, pick a base point $m_0$, and let $K=H_{m_0}$ be the isotropy group at $m_0$. 
Then 
\[ P=\sz^{-1}(m_0)\stackrel{\tz|_P}{\lra} M\]
%,\  \pi=\tz|_P\colon \colon P\to M\]
is a principal $K$-bundle for the action given by $k\cdot p=p\circ k^{-1}$, and the map 
$\on{Pair}(P)\to G,\ (p',p)\mapsto p'\circ p^{-1}$ descends to an isomorphism $G(P)\to G$. 
%$G\cong  G(P)$ by the map taking $g\in G$ to the $K$-map $P_m\to P_{m'},\ p\mapsto g\circ p$. 

\begin{remark}
Note that this identification uses the choice of a base point $m_0$, and that the corresponding $P$ comes with a trivialization  at $m_0$.	Given another base point $m_0'$, the choice of an arrow $g\in G$ from $m_0=\sz(g)$ to $m_0'=\tz(g)$ gives an isomorphism between the isotropy groups $K,K'$ at $m_0,m_0'$, and an isomorphism of principal bundles $P,P'$ given as the respective source fibers.
\end{remark}

%\begin{lemma}Let $G\rra M$ be a transitive Lie groupoid. If 	$\varphi \colon \R\times Q\to M$ is a smooth homotopy, then the Lie groupoids $\varphi_t^!G \rra Q$ (where $\varphi_t=\varphi(t,\cdot)$) are all isomorphic. \end{lemma}\begin{proof}Writing $G=G(P)$, this follows from the analogous statement for principal bundles.	Concretely, choose a principal  connection on $\varphi^*P$. The horizontal lift of $\f{\p}{\p t}$ to $\varphi^*P$ is $K$-equivariant, and its flow gives the desired isomorphisms between fibers. \end{proof}In particular, if $M$ admits a deformation retraction onto a base point$m_0\in M$, then the transitive groupoid $G\rra M$ is isomorphic to  $p^!i^!G$, where $p\colon M\to \{m_0\}$ and $i\colon \{m_0\}\to M$ are the projection and inclusion. But $i^!G=H_{m_0}$, and $p^!i^!G=p^!H_{m_0}=\on{Pair}(M)\times H_{m_0}$. This shows that every transitive Lie groupoid over a contractible manifold is trivial (isomorphic to the product of a pair groupoid and a group). 

\subsection{Transitive Lie algebroids}
A Lie algebroid $A\Rightarrow M$ is \emph{transitive} if its anchor map $\a_A\colon A\to TM$ is surjective. 
The kernel of the anchor map is the bundle 
of isotropy Lie algebras $\h_m=\ker(\a_A|_m)$; it fits into 
an exact sequence of Lie algebroids 
\begin{equation}\label{eq:exactla} 0\to  \h \to A\to TM\to 0.\end{equation}
Pullbacks of transitive Lie algebroids are defined for arbitrary smooth maps $\varphi\colon Q\to M$; the transversality condition is automatic. The pullback $\varphi^!A\Ra Q$ is again transitive, with 
$\ker(\a_{\varphi^!A})=\varphi^*\h$. 

\begin{example}[Atiyah algebroid \cite{at:com}]
	For any  principal $K$-bundle $\pi\colon P\to M$, the \emph{Atiyah algebroid}
	\begin{equation}\label{eq:atiyahalgebroid} A(P)=TP/K\Ra M\end{equation} 
	is a transitive Lie algebroid. 
	The anchor map $\a\colon A(P)\to TM$  is induced by the map $T\pi\colon TP\to TM$, and the bracket on sections 
	comes from their identification with $K$-invariant vector fields on $P$. 
	Under pullbacks, $\varphi^!A(P)=A(\varphi^*P)$. The identification $TP=\on{Lie}(\on{Pair}(P))$ descends to an isomorphism  $A(P)=\on{Lie}(G(P))$. 
\end{example}

A vector  bundle splitting $j\colon TM\to A(P)$ of the anchor map is equivalent to a $K$-equivariant splitting of 
the bundle map $TP\to \pi^* TM$, and is hence  equivalent to 
a principal connection on $P$. 
For this reason, vector bundle splittings   $j\colon TM\to A$ of arbitrary transitive Lie algebroids are sometimes called connections \cite[Section 17.2]{ca:ge}. (We will not follow this convention, to avoid confusion with vector bundle connections on $A$.) The curvature $R\in \Omega^2(M,\h)$ of a splitting is the $\h$-valued 2-form defined by 
\begin{equation}\label{eq:r} R(X,Y)=[j(X),j(Y)]-j([X,Y])\end{equation}
for $X,Y\in\mf{X}(M)$.  Given a smooth map $\varphi\colon Q\to M$ there is a natural pull-back splitting on $\varphi^!A$ given by \[
TQ\to \varphi^!A=TQ\times_{TM} A,\ \ 
 v\mapsto \big(v,j(T\varphi(v))\big),\] 
with curvature $\varphi^*R\in \Omega^2(Q,\varphi^*\h)$. 

\begin{lemma}\label{lem:conn}\cite{cra:intlie}
A splitting $j\colon TM\to A$ of a transitive Lie algebroid $A\Ra M$ induces a linear connection $\nabla$ 
on the Lie algebra bundle $\h=\ker(\a_A)$, by 
\[ \nabla_X\sigma=[j(X),\sigma],\ \ \ \ X\in\mf{X}(M),\ \sigma\in \Gamma(\h).\]
This connection preserves the subbundle $\on{Cent}(\h)$ of centers of isotropy Lie algebras. 
The restriction to $\on{Cent}(\h)$ does not depend on the choice of $j$, and is a flat connection. 
\end{lemma}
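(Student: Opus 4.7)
The plan is to verify each of the four assertions directly from the defining formula $\nabla_X\sigma=[j(X),\sigma]$ and the axioms of a Lie algebroid, in particular the Leibniz rule, the Jacobi identity, and the fact that the anchor $\a_A$ is a Lie algebra morphism on sections.

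First I would check that $\nabla$ really is a connection on the vector bundle $\h$. For well-definedness, I apply the anchor: $\a_A([j(X),\sigma])=[\a_A(j(X)),\a_A(\sigma)]=[X,0]=0$, since $\sigma\in\Gamma(\h)=\ker(\a_A)$, so $\nabla_X\sigma\in\Gamma(\h)$. For $C^\infty(M)$-linearity in $X$, the Leibniz rule of the algebroid gives $[j(fX),\sigma]=f[j(X),\sigma]-(\L_{\a_A(\sigma)}f)\,j(X)=f[j(X),\sigma]$, again using $\a_A(\sigma)=0$. The Leibniz rule in $\sigma$, namely $\nabla_X(f\sigma)=f\nabla_X\sigma+(\L_Xf)\sigma$, follows from the Leibniz rule of the algebroid together with $\a_A(j(X))=X$.

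Next I would show that $\nabla$ preserves $\on{Cent}(\h)$. If $\sigma\in\Gamma(\on{Cent}(\h))$ and $\tau\in\Gamma(\h)$ is arbitrary, the Jacobi identity in $A$ gives
\[ [\nabla_X\sigma,\tau]=[[j(X),\sigma],\tau]=[j(X),[\sigma,\tau]]-[\sigma,[j(X),\tau]].\]
The first term vanishes since $\sigma$ is fiberwise central in $\h$; the second vanishes since $[j(X),\tau]\in\Gamma(\h)$ (by the same anchor computation as above) and $\sigma$ is central. Hence $\nabla_X\sigma\in\Gamma(\on{Cent}(\h))$.

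For independence of $j$ on $\on{Cent}(\h)$: any two splittings differ by a section of $\ker(\a_A)=\h$, so $j(X)-j'(X)\in\Gamma(\h)$, and then $\nabla_X\sigma-\nabla'_X\sigma=[j(X)-j'(X),\sigma]=0$ for central $\sigma$. Finally, for flatness, a direct Jacobi-identity computation yields
\[ R^\nabla(X,Y)\sigma=[j(X),[j(Y),\sigma]]-[j(Y),[j(X),\sigma]]-[j([X,Y]),\sigma]=[R(X,Y),\sigma],\]
with $R$ the curvature 2-form from \eqref{eq:r}. Since $R(X,Y)\in\Gamma(\h)$ and $\sigma$ is central, this bracket vanishes.

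None of the steps involve any genuine obstacle; the proof is purely a manipulation of the Leibniz and Jacobi identities, exploiting repeatedly that sections of $\h$ have zero anchor and that central sections commute with all of $\h$. If anything, the only point requiring mild care is making sure that each bracket appearing in the Jacobi computations lies in $\Gamma(\h)$ so that centrality applies, which is handled by the anchor check performed at the very first step.
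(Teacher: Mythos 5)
Your proof is correct and follows essentially the same route as the paper: the derivation/Jacobi argument for preservation of $\on{Cent}(\h)$, the identity $R^\nabla(X,Y)\sigma=[R(X,Y),\sigma]$ for flatness, and the observation that two splittings differ by an $\h$-valued bundle map for independence of $j$. You simply spell out the routine anchor and Leibniz verifications that the paper leaves implicit.
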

\begin{proof}
It is clear that $\nabla_X$ is a derivation of the $C^\infty(M)$-module structure on $\h$. 	
The Jacobi identity for the Lie algebroid bracket shows that $\nabla_X$ is a derivation of the bracket on sections of $\h$. In particular, the subbundle $\on{Cent}(\h)$ is preserved.  The curvature of $\nabla$ is expressed in terms of \eqref{eq:r} as
\[ R^\nabla\!(X,Y)\,\sigma=[R(X,Y),\sigma]\]
for $X,Y\in\mf{X}(M),\ \sigma\in \Gamma(\h)$. In particular, the right hand side vanishes if $\sigma$ is a 
section of $\on{Cent}(\h)$. Finally, note that any two splittings $j,j'$ differ by a bundle map $TM\to \h$, 
hence 
\[ \nabla'_X-\nabla_X=[j'(X)-j(X),\cdot]\]
vanishes on  $\Gamma(\on{Cent}(\h))$; this shows that the restriction to $\on{Cent}(\h)$ does not depend on the choice of $j$. 
\end{proof}
\begin{lemma}
	Let $A\rra M$ be a transitive Lie groupoid, and $Q$ another manifold. If 
	\[ \varphi \colon \R\times Q\to M,\ (t,q)\mapsto \varphi_t(q)\] is a smooth map, then the Lie algebroids $\varphi_t^!A \rra Q$	are all isomorphic. 
\end{lemma}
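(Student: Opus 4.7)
The plan is to construct a Lie algebroid over the product $\R\times Q$ whose restrictions to the slices $\{t\}\times Q$ recover the family $\varphi_t^!A$, and then produce isomorphisms between any two slices by flowing along a carefully chosen section.

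First I would form the total pullback $\wt A := \varphi^! A \Ra \R\times Q$, which is well-defined since $A$ is transitive (so no transversality condition is needed). Under the inclusion $\iota_t\colon Q\hookrightarrow \R\times Q,\ q\mapsto (t,q)$, one has $\iota_t^!\wt A= \varphi_t^!A$, so it suffices to construct Lie algebroid isomorphisms between the various $\iota_t^!\wt A$. Next, pick any splitting $j\colon TM\to A$, and let $\wt j\colon T(\R\times Q)\to \wt A$ be the induced pullback splitting. The section
\[ \sigma := \wt j\!\left(\tfrac{\p}{\p t}\right)\in \Gamma(\wt A) \]
has anchor $\a_{\wt A}(\sigma)=\p/\p t$, which is a complete vector field on $\R\times Q$ with flow $\Psi_s(t,q)=(t+s,q)$.

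The crucial step is to integrate $\sigma$ to a one-parameter family of Lie algebroid automorphisms $\Phi_s$ of $\wt A$ covering $\Psi_s$. This is a standard fact: a section of a Lie algebroid whose anchor is a complete vector field generates a flow of Lie algebroid automorphisms, because the derivation $[\sigma,\cdot\,]$ on $\Gamma(\wt A)$ is a bracket-preserving infinitesimal automorphism covering $\p/\p t$. Concretely, using the splitting $\wt j$ to identify $\wt A\cong T(\R\times Q)\oplus \varphi^*\h$, the bracket on $\wt A$ is determined by the fiberwise bracket on $\h$, the linear connection $\nabla$ of Lemma \ref{lem:conn}, and the pullback curvature $\varphi^*R$. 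The automorphism $\Phi_s$ is then the identity on the $T(\R\times Q)$-part (since $\wt j$ was used to define $\sigma$) and on the $\varphi^*\h$-part it is parallel transport along the integral curves $s\mapsto (t+s,q)$ of $\p/\p t$ with respect to $\nabla$. Granted $\Phi_s$ exists, its restriction under $\iota_0$ and $\iota_t$ gives a Lie algebroid isomorphism $\varphi_0^!A\to \varphi_t^!A$ and, composing, between any two slices.

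The main obstacle is verifying that the candidate map $\Phi_s$ is indeed a Lie algebroid morphism, i.e.\ that parallel transport along $\p/\p t$ intertwines the brackets. The bracket on the pullback is controlled by $\nabla$ and $\varphi^*R$, so the check reduces to: (i) $\nabla$ is compatible with the fiberwise Lie bracket on $\h$, which is the content of Lemma \ref{lem:conn}; (ii) the curvature $\varphi^*R\in\Omega^2(\R\times Q,\varphi^*\h)$, when contracted with $\p/\p t$, measures precisely the failure of $\nabla_{\p/\p t}$ to commute with the horizontal brackets, which follows from the Bianchi-type identity \eqref{eq:r}. Once this is in hand, one obtains the isomorphism $\varphi_0^!A\cong \varphi_t^!A$ for every $t\in\R$, as claimed.
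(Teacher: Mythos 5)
Your proposal follows essentially the same route as the paper: choose a splitting of the pullback algebroid over $\R\times Q$, lift $\tfrac{\p}{\p t}$ to a section $\sigma$, and integrate the derivation $[\sigma,\cdot\,]$ to a flow of Lie algebroid automorphisms identifying the slices $\varphi_t^!A$. One small caveat: your concrete description of $\Phi_s$ as block-diagonal (identity on the $T(\R\times Q)$-part, parallel transport on the $\varphi^*\h$-part) omits the off-diagonal coupling coming from $\iota_{\p/\p t}\varphi^*R$, but since you only need the abstract existence of the flow of $[\sigma,\cdot\,]$, this does not affect the argument.
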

\begin{proof}
	Choose a splitting $j\colon T(\R\times M) \to \varphi^!A$. The choice of $j$ gives a lift of the vector field $X=\f{\p}{\p t}$ to a section $\sigma=j(X)$ of 
	$\varphi^!A$. The Lie algebroid bracket $[\sigma,\cdot]$ is an infinitesimal Lie algebroid automorphism, so it corresponds to a vector field $\wt{X}$ on the total space of $\varphi^!A$, projecting to $X$, and such that the 
	flow is by Lie algebroid morphisms. This flow  gives Lie algebroid isomorphisms between the pullbacks $\varphi_t^!A$.  
\end{proof}

One consequence of this result is that the isotropy bundle $\h=\ker(\a_A)$ of a transitive Lie algebroid $A\Ra M$ is 
more than  just a Lie algebroid with zero anchor: it is a locally trivial Lie algebra bundle.  

\begin{proposition}\label{prop:triv} For any transitive Lie algebroid $A\Ra M$, the bundle of isotropy Lie algebras 
	\[ \h=\ker(\a_A)\to M\] is a locally trivial Lie algebra bundle. In fact, if $U\subset M$ is a contractible open subset, then any choice of a smooth deformation retraction of 
	$U$ onto  a base point $m_0\in U$ determines an isomorphism of Lie algebroids $A|_U\cong TU\times \h_{m_0}$. 
\end{proposition}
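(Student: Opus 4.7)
The plan is to derive this from the preceding lemma about pullbacks along smooth homotopies, plus the fact that every point of $M$ has a contractible open neighbourhood.

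First, I would fix a smooth deformation retraction of $U$ onto $m_0$, i.e.\ a smooth map $F\colon [0,1]\times U\to U$ with $F_0=\id_U$, $F_1\equiv m_0$, and (if needed) $F_t(m_0)=m_0$ for all $t$. Composing with a smooth reparametrization $\rho\colon \R\to [0,1]$ that equals $0$ for $t\le 0$ and $1$ for $t\ge 1$, I obtain a smooth map $\varphi\colon \R\times U\to M$ with $\varphi_0=\id_U$ and $\varphi_1\equiv m_0$. Applying the previous lemma (with $Q=U$) to $\varphi$ gives an isomorphism of Lie algebroids $\varphi_0^!A\cong \varphi_1^!A$ over $U$, canonically determined by the choice of splitting used to lift $\partial/\partial t$; concretely, it is the time-$1$ flow of the infinitesimal Lie algebroid automorphism $[j(\partial/\partial t),\cdot]$.

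Next, I would identify each side. On the one hand, $\varphi_0^!A=A|_U$ because $\varphi_0$ is the identity. On the other hand, $\varphi_1$ is constant, so $T\varphi_1=0$, and the fibre product
\[ \varphi_1^!A\;=\;TU\times_{TM}A\;=\;\bigl\{(v,a)\in TU\oplus (U\times A_{m_0})\ \big|\ \a_A(a)=0\bigr\}\;=\;TU\oplus(U\times \h_{m_0})\]
as a vector bundle. To see this is the product Lie algebroid $TU\times \h_{m_0}$, I would compute the bracket on sections using the inclusion $\varphi_1^!A\subset TU\times A$: for $X,Y\in \mf{X}(U)$ and constant sections $\sigma,\tau$ of the trivial bundle $U\times \h_{m_0}$, all terms $L_X\tau$ and $L_Y\sigma$ vanish, and $[\sigma,\tau]$ is computed fibrewise in $\h_{m_0}$, giving the expected formula $[X+\sigma,Y+\tau]=[X,Y]+[\sigma,\tau]$. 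The anchor is projection to $TU$.

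Composing these two identifications yields the desired isomorphism $A|_U\cong TU\times \h_{m_0}$. Restricting it to $\ker(\a_A)$ gives a trivialization $\h|_U\cong U\times \h_{m_0}$ as a Lie algebra bundle, and since every point of $M$ admits a contractible open neighbourhood, $\h\to M$ is locally trivial.

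The main (minor) obstacle is the verification that $\varphi_1^!A$, with its subalgebroid structure inside $TU\times A$, literally coincides with the product Lie algebroid $TU\times \h_{m_0}$; this is a direct bracket computation using that pullback of the constant section $\tau$ along $\varphi_1$ is constant, hence has zero Lie derivative along any $X\in \mf{X}(U)$. Everything else is a formal consequence of the previous lemma.
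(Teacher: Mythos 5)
Your proposal is correct and follows essentially the same route as the paper: apply the preceding lemma to a smooth homotopy between $\id_U$ and the constant map to $m_0$, then identify the two pullbacks. The only cosmetic difference is that the paper identifies the pullback along the constant map via functoriality, writing $(i\circ p)^!A=p^!i^!A=p^!\h_{m_0}=TU\times\h_{m_0}$, whereas you verify the same identification by a direct fibre-product and bracket computation.
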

\begin{proof}
Let $i\colon \{m_0\}\to U$ be the inclusion, and $p\colon U\to \{m_0\}$ the projection. Let 
$\varphi\colon \R\times U\to U,\ (t,m)\mapsto \varphi_t(m)$ be a smooth map with $\varphi_0=i\circ p$ and $\varphi_1=\on{id}_U$. By the lemma, $\varphi$ determines a Lie algebroid isomorphism 
\[ A|_U=\varphi_1^!A\stackrel{\cong}{\lra} \varphi_0^!A=p^!i^!A=p^!\h_{m_0}=TU\times \h_{m_0}.\qedhere\] 
\end{proof}
In particular,  the isotropy algebras $\h_m$ of a transitive Lie algebroid are all isomorphic. We refer to any Lie algebra $\k$ in this isomorphism class as the \emph{structure Lie algebra of $A$}. An isomorphism of $A\Ra M$ with the trivial transitive Lie algebroid $TM\times \k\Ra M$ will be called a \emph{trivialization} of $A$.

\subsection{Gauge transformations}
Given a transitive Lie algebroid $A\Ra M$ with structure Lie algebra $\k$, we can consider its group 
$\on{Gau}(A)$ of \emph{gauge transformations}, consisting of Lie algebroid automorphisms of $A$ preserving the 
anchor map. If $A=A(P)$, for a principal $K$-bundle $\pi\colon P\to M$, then every gauge transformation of the principal bundle $P$ determines a gauge transformation of $A$.
%, but the resulting map $\on{Gau}(P)\to \on{Gau}(A)$ is not surjective, in general.  
We shall need a description of the gauge group for a trivial transitive Lie algebroid 
\[ A=TM\times\k\Ra M.\] 
The group of 
Lie algebra automorphisms $\Psi\in \on{Aut}(\k)$ acts by gauge transformations 
\begin{equation}\label{eq:psiaction} 
v+\xi\mapsto v+\Psi(\xi),\end{equation} 
while the group of functions 
$f\in C^\infty(M,\wt{K})$ acts by gauge transformations 
\begin{equation}\label{eq:faction} v+\xi\mapsto v+\Ad_f\xi-\iota_v f^*\theta^R;\end{equation}
here $\wt{K}$ denotes the connected, simply connected Lie group integrating $\k$, and $\theta^R
\in \Omega^1(\wt{K},\k)$ is the right-invariant Maurer-Cartan form. 

Hence, we obtain an action of the  semi-direct product of these two groups.

\begin{proposition}\label{prop:gauge}
If $M$ is connected and simply connected, then the map 
\[ C^\infty(M,\wt{K})\rtimes \on{Aut}(\k)\to \on{Gau}(TM\times \k)\] 
is surjective. Its kernel consists of pairs $(c^{-1},\Ad_{c})$ with $c\in \wt{K}$ (as a constant function).  
Equivalently, given a base point $m_0$, every gauge transformation is given by a unique pair $(f,\Psi)$ such that $f(m_0)=e$. 
\end{proposition}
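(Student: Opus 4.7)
A gauge transformation $\Phi$ of $TM\times\k$ is a Lie algebroid automorphism over $\id_M$ preserving the projection to $TM$, so it must have the form $\Phi(v+\xi)=v+\beta_m(v)+\Psi_m(\xi)$ for smoothly varying linear maps $\beta_m\colon T_mM\to\k$ and $\Psi_m\in\on{End}(\k)$. My first step is to unpack the Lie algebroid morphism identity $\Phi[\sigma,\tau]=[\Phi\sigma,\Phi\tau]$ on arbitrary sections of $TM\times\k$. Separating the result into its pure vector-field, pure $\k$-valued-function, and mixed parts yields three equivalent conditions: (i) each $\Psi_m\in\Aut(\k)$; (ii) $\beta\in\Omega^1(M,\k)$ satisfies the Maurer-Cartan equation $d\beta+\tf12[\beta,\beta]_\k=0$; and (iii) the compatibility $X\bigl(\Psi_m(\xi)\bigr)+[\beta(X),\Psi_m(\xi)]=0$ for $X\in\mf{X}(M)$ and fixed $\xi\in\k$. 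Conversely, any pair $(\beta,\Psi)$ satisfying (i)-(iii) defines such a gauge transformation.

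Second, I would verify by direct substitution that \eqref{eq:psiaction} and \eqref{eq:faction} are gauge transformations, and compute that the composite action of $(f,\Psi_0)\in C^\infty(M,\wt{K})\rtimes\Aut(\k)$ is $v+\xi\mapsto v+\Ad_{f(m)}\Psi_0(\xi)-\iota_v f^*\theta^R$, whose data are $\beta=-f^*\theta^R$ and $\Psi_m=\Ad_{f(m)}\circ\Psi_0$. The Maurer-Cartan equation for $\theta^R$ produces (ii), differentiating $m\mapsto\Ad_{f(m)}$ produces (iii), and (i) is automatic.

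The heart of the proof is surjectivity. Given $\Phi$ with arbitrary data $(\beta,\Psi)$, the $1$-form $\beta$ is Maurer-Cartan, so by the classical Cartan-style integration---path-develop $\beta$ from the base point $m_0$, noting that the MC equation makes the endpoint homotopy-invariant along a path homotopy and $\pi_1(M)=1$ makes it well-defined globally---there is a unique smooth $f\colon M\to\wt{K}$ with $f(m_0)=e$ and $f^*\theta^R=-\beta$. Composing $\Phi$ with the inverse of the $f$-action produces a gauge transformation $\Phi'$ whose $\beta'$-datum vanishes; condition (iii) then forces $X(\Psi')=0$ for every $X$, so by connectedness of $M$ the maps $\Psi'_m$ agree with some constant $\Psi_0\in\Aut(\k)$, giving $\Phi$ as the image of $(f,\Psi_0)$. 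The normalization $f(m_0)=e$ pins down this pair uniquely.

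Finally, $(f,\Psi_0)$ lies in the kernel iff $f^*\theta^R=0$ and $\Ad_{f(m)}\Psi_0=\id_\k$ for every $m$. The first forces $f\equiv c\in\wt{K}$ (by connectedness), and the second then gives $\Psi_0=\Ad_{c^{-1}}$; substituting $c\mapsto c^{-1}$ matches the description in the statement. The sole non-algebraic ingredient is the Maurer-Cartan integration above, and that is exactly the step at which simple-connectedness of $M$ is used; everything else is bookkeeping arising from expanding the Lie algebroid morphism condition.
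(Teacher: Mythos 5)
Your proposal is correct and follows essentially the same route as the paper: decompose a gauge transformation into a $\k$-valued $1$-form and a family of endomorphisms, extract the three conditions (automorphism-valued, Maurer--Cartan, and the covariant-constancy equation), integrate the Maurer--Cartan form to a map $f\colon M\to\wt{K}$ using simple connectedness, and then use the third equation to reduce the remaining part to a constant $\Psi_0\in\Aut(\k)$. The only (cosmetic) difference is that you normalize away the $1$-form by composing with the inverse $f$-action before concluding constancy, whereas the paper solves $\d\Phi+\ad_\theta\circ\Phi=0$ directly to get $\Phi=\Ad_f\circ\Psi$.
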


\begin{proof}
		The group of vector bundle automorphism of $TM\times\k\to M$, preserving the projection to $TM$ and inducing the identity on the base, is a semi-direct product 
	$\Omega^1(M,\k)\rtimes C^\infty(M,\GL(\k))$.
	Elements $(\theta,\Phi)$ of this group act on sections $X+\xi$, with $X\in\mf{X}(M),\ \xi\in C^\infty(M,\k)$,  by 
	\[ (\theta,\Phi)\cdot (X+\xi)=X+\iota_X\theta+\Phi(\xi).\]	
	Such a transformation preserves the Lie algebroid bracket 
	\[ [X+\xi,Y+\eta]=[X,Y]+[\xi,\eta]+\L_X \eta-\L_Y\xi\]  if and only if the following three equations are satisfied, for all $X,Y\in\mf{X}(M),\ \xi,\eta\in C^\infty(M,\k)$:
	\begin{align*}
	\Phi([\xi,\eta])&=[\Phi(\xi),\Phi(\eta)],\\
	\theta([X,Y])&=\L_X\iota_Y\theta-\L_Y\iota_X\theta+[\iota_X\theta,\iota_Y\theta],\\\ \Phi(\L_X\eta)&=\L_X(\Phi(\eta))+[\iota_X\theta,\Phi(\eta)].
	\end{align*}
	The first condition says that $\Phi$ takes values in $\on{Aut}(\k)$, 
	the second condition says that $\theta$ satisfies the Maurer-Cartan equation $\d\theta+\hh[\theta,\theta]=0$, and the  third equation means $\L_X\Phi+\ad_{\theta(X)}\circ \Phi=0$ for all $X$, i.e.,
	$\d\Phi+\ad_\theta\circ \Phi=0$. This description of gauge transformations of $TM\times \k\Ra M$ in terms of pairs $(\theta,\Phi)$ is due to Mackenzie  \cite[Section 8.2]{mac:gen}, and does not require $M$ to be connected and simply connected.  
		We shall use these conditions now. Fix a base point $m_0\in M$. 
	Since $\theta$ satisfies the Maurer-Cartan equation, and $M$ is simply connected, there is a unique function 
	$f\in C^\infty({M},\wt{K})$, with $f(m_0)=e$, 
	such that 
	\[ \theta=-f^*\theta^R.\] 
	Next, 
	\[ \d {\Phi} + \ad_{{\theta}}\circ {\Phi}=0 \Rightarrow 
	\d (\Ad_{{f}^{-1}} \circ {\Phi})=0 \Rightarrow \Ad_{{f}^{-1}}\circ\, {\Phi}=\on{id}
	\Rightarrow {\Phi}=\Ad_{{f}}\circ \Psi
	\]
	where $\Psi=\Phi(m_0)\in \on{Aut}(\k)$. We hence see that the action of $(\Phi,\theta)$ is the action of 
	$f$ followed by the action of $\Psi$. This action is trivial if and only if $f^*\theta^R=0$, which means that $f=c^{-1}$ is a constant function, and $\Ad_{{f}}\circ \Psi=\on{id}$, so $\Psi=\Ad_c$. 
\end{proof}

\subsection{Framings}\label{subsec:framings}
Let $M$ be a manifold with a base point $m_0$. A framing of a principal $K$-bundle $P\to M$ at $m_0$ is a trivialization of $P$ at $m_0$, i.e., and isomorphism $P|_{m_0}\cong K$. Similarly, given a transitive Lie algebroid 
  $A\Ra M$ be a transitive Lie algebroid, with anchor $\a$ and structure Lie algebra $\k$, we define a framing 
  at $m_0$ to be an isomorphism of Lie algebras $\phi_0\colon \ker(\a)|_{m_0}\to \k$. 
  
\begin{definition}\label{def:framings}
	\begin{enumerate}
		\item We denote by $\on{Prin}_K(M,m_0)$ the set of isomorphism classes of principal $K$-bundles $P\to M$ 
		with framing at $m_0$.
		\item  We denote by $\on{Tran}_\k(M,m_0)$ the set of isomorphism classes of transitive Lie algebroids $A\Ra M$ with structure Lie algebra $\k$, with framing at $m_0$, modulo isomorphisms intertwining the framing. 
	\end{enumerate}
\end{definition}
A framing of $P\to M$ at $m_0$ determines a framing of $A(P)=TP/K\Ra M$ at $m_0$, giving a natural map 
\[ \on{Prin}_K(M,m_0)\to \on{Tran}_\k(M,m_0).\]

Any framing of  $P\to M$ at $m_0$ extends to a trivialization over a contractible open neighborhood of $m_0$; the extension is unique up to isotopy. Similarly, given a transitive Lie algebroid $A\ra M$ with framing at $m_0$, and a  contractible open neighborhood $U$ of $m_0$, we may choose an isomorphism of Lie algebroids $\phi\colon A|_U\to TU\times \k$, intertwining the framings. (See Proposition \ref{prop:triv}.)  By Proposition \ref{prop:gauge} the extension $\phi$ is unique up to the action of $f\in C^\infty(U,\wt{K})$ with $f(m_0)=e$; 
in particular it is unique up to isotopy.

As an application, suppose  $M',\,M$ are manifolds with base points $m_0',m_0$. After choosing a germ of a diffeomorphism between open neighborhoods of the base points, identifying the base points, the connected sum $M'\# M$ 
with base point $m_0'=m_0$ 
is defined. (As is well-known, isotopic diffeomorphisms give diffeomorphic manifolds.) Extending the framings to trivializations over open neighborhoods, as explained above, the connected sum operation extends to principal bundles and Lie algebroids. 
Once hence obtains maps 
\begin{align*}
\on{Prin}_K(M',m_0')\times \on{Prin}_K(M,m_0)\to \on{Prin}_K(M'\# M,m_0),&\ \ ([P'],[P])\mapsto [P'\# P].\\
 \on{Tran}_\k(M',m_0')\times  \on{Tran}_\k(M,m_0)\to  \on{Tran}_\k(M'\# M,m_0),&\ \ 
([A'],[A])\mapsto [A'\# A].
\end{align*}

\subsection{The path groupoid} \label{subsec:pathgroupoid}
Given a smooth path $\gamma\colon \sI\to M$ in a manifold $M$, we consider $\gamma(0)$ as the \emph{source} and $\gamma(1)$ as the \emph{target}. 
 In accordance with our conventions for groupoids, we picture paths $\gamma$ (or the intervals $\sI$) as going from the right to the left, so $0$ is the right end point and $1$ is the left end point. 
	\begin{center}
	%	\resizebox{2.6cm}{2.3cm}{
	\resizebox{3cm}{!}{
		\begin{tikzpicture}
		\begin{scope}[decoration={
			markings,
			mark=at position 0.5 with {\arrow{>}}}
		] 
		\draw[postaction={decorate}] (0,0) -- (-1,0) node[above] at (-0.5,0){\tiny $\gamma$};
		\coordinate [label=center:{\tiny $1$}]  (1) at (-1.2,0);
		\coordinate [label=center:{\tiny $0$}]  (0) at (0.2,0);
		\end{scope}
		\end{tikzpicture}
	}
\end{center}

For paths $\gamma,\gamma'\colon \sI\to M$ such that $\gamma'(0)=\gamma(1)$, the concatenation is defined as usual by 
\[ (\gamma'\ast\gamma)(t)=\begin{cases}\gamma(2t)&0\le t\le \f{1}{2}\\
\gamma'(2t-1)& \f{1}{2}\le t\le 1.\end{cases}\]
Here one encounters the problem that the concatenation of smooth paths need not be smooth. One way of dealing with this issue is to restrict attention to paths with \emph{sitting instances} \cite{mac:ho,sch:par}, i.e., paths that are constant on neighborhoods of $\{0\},\{1\}\subset \sI$. 
 A \emph{homotopy with sitting instances} $\gamma_0\simeq \gamma_1$ between two such paths  from $m$ to $m'$ is given by a smooth map 
 \begin{equation}\label{eq:homotopy}  h\colon \sI^2\to  M\end{equation}
 such that 
 \[ h(\cdot,0)=\gamma_0,\ h(\cdot,1)=\gamma_1,\ h(0,\cdot)=m,\ h(1,\cdot)=m',\] 
 and such that $h(s,t)$ depends only on $t$ for $(s,t)$ near 
 $\partial \sI\times \sI$, and depends only on $s$ for $(s,t)$ near $\sI\times \partial \sI$.  
%Accordingly, the vertices $(0,0)$, $(1,0)$, $(0,1)$ and $(1,1)$ of $\sI^2$ are pictured as the upper right, upper left, lower right, and lower left corners. 
 	\begin{center}
 	%	\resizebox{2.6cm}{2.3cm}{
 	\resizebox{3cm}{!}{
 		\begin{tikzpicture}
 		\begin{scope}[decoration={
 			markings,
 			mark=at position 0.5 with {\arrow{>}}}
 		] 
 		\draw[postaction={decorate}] (0,0) -- (-1,0) node[above] at (-0.5,0){\tiny $\gamma_0$};
 		\draw[postaction={decorate}] (-1,0) -- (-1,-1) node[left] at (-1,-0.5){\tiny $m'$};
 		\draw[postaction={decorate}] (0,-1) -- (-1,-1) node[right] at (0,-0.5){\tiny $m$};
 		\draw[postaction={decorate}] (0,0) -- (0,-1) node[below] at (-0.5,-1){\tiny $\gamma_1$};
 		\coordinate [label=center:{\tiny $h$}]  (h) at (-0.5,-0.5);
 		\end{scope}
 		\end{tikzpicture}
 	}
 \end{center}
 The space  of homotopy classes $[\gamma]$ is a transitive groupoid 
 \[ \Pi(M)\rra M\] 
 called the \emph{fundamental groupoid}; it is the source-simply connected integration of $TM\Ra M$.  
  Its isotropy groups are the fundamental groups $\pi_1(M,m)$. 
    %In this case,  $P=\sz^{-1}(m_0)$ is the universal covering space, regarded as a principal bundle for the action of $K=\pi_1(M,m_0)$ by deck transformations.  

  One can also consider the stronger equivalence relation of \emph{thin homotopy} \cite{bar:hol,cae:fam}, denoted $\gamma_0\simeq_{\on{thin}}\gamma_1$, given by a homotopy with sitting instances such that $h$ has rank $\le 1$ everywhere. For example, reparametrizations of paths are thin homotopies. The set of thin homotopy classes 
  $[\gamma]_{\on{thin}}$ 
  is an infinite-dimensional transitive groupoid
 \[ \on{Path}(M)\rra M\]
 called the \emph{path groupoid}. The isotropy group at $m$, denoted $\on{Loop}(M)_m$, is the group of thin homotopy classes of loops based $m\in M$.   Given a connection on a vector bundle $V\to M$, one obtains an 
 action of  the path groupoid on the vector bundle (compatible with the linear structure). 
 Similarly, one can describe connections on principal bundles, Lie group bundles, Lie algebra bundles, and so on, in terms of actions of the path groupoid; flatness of the connection means that the action descends to an action of the fundamental groupoid
 \[ \Pi(M)=\on{Path}(M)/ \on{Loop}^0(M)\]
 where $\on{Loop}^0(M)\subset \on{Loop}(M)$ is the bundle of \emph{contractible loops}. 
 
 \begin{remark}
 	The path groupoid is infinite-dimensional, but it has a diffeology  for which it is a diffeological groupoid (see \cite[Appendix]{col:par}). The diffeology gives a notion of smooth maps, 
 	and 
  	%The sheaf of plots in this diffeology is generated 	by all maps $\sigma\colon O\to \on{Path}(M)$ (with $O\subset \R^n$ open) for which there exists a 	smooth map $\ti{\sigma}\colon O\times \sI\to M,\ (x,t)\mapsto \gamma_x(t)$ such that 	$\sigma(x)=  [\gamma_x]_{\on{thin}}$. That is, a function on $\on{Path}(M)$ is smooth if and only if its composition with such map $\sigma$ smooth. Using this notion of smooth structure, 
 	a connection on a 
 	vector bundle, principal bundle, etc is equivalent, via parallel transport, to a \emph{smooth} action of  $\on{Path}(M)$.  See \cite{cae:fam,col:par,sch:par} for further discussions. 
 \end{remark}

\section{Classification of transitive Lie algebroids over 2-spheres} 
It is well-known that for Lie groups $K$, the principal $K$-bundles over 2-spheres (with fixed trivialization at some base point) 
are classified by elements of the fundamental group $\pi_1(K,e)$. We will explain that similarly, transitive Lie algebroids over $S^2$, with structure Lie algebra $\k$,  are classified  by elements of the center of $\wt{K}$.  

Choose a base point $m_0\in S^2$, and consider the set $\on{Tran}_\k(S^2,m_0)$ of isomorphism classes of transitive Lie algebroids $A\Ra S^2$ with structure Lie algebra $\k$, with framing at $m_0$, and the set $\on{Prin}_K(S^2,m_0)$ be the set of isomorphism classes of principal $K$-bundles $P\to S^2$ with framing at $m_0$. Both of these sets have group structures, with product given by the connected sum operation (see Section \ref{subsec:framings}), using $S^2\# S^2=S^2$. If $\k$ is the Lie algebra of a connected Lie group $K$, we have the group homomorphism 
\[ \on{Prin}_K(S^2,m_0)\to \on{Tran}_\k(S^2,m_0),\ \ [P]\mapsto [A(P)].\] 
%. 

%For any connected Lie group $K$, there is a group isomorphism $\on{Prin}_K(S^n,m_0)\cong \pi_{n-1}(K,e)$ given by the clutching construction.  We are interested in a similar description for $\on{Tran}_\k(S^n,m_0)$. For $n>2$, the classification of framed transitive Lie algebroids with structure Lie algebra $\k$ coincides with that of principal $\wt{K}$-bundles, however, the case  $n=2$ is different. 

% If $K$ is a connected Lie group with $\on{Lie}(K)$, then $\wt{K}=\wt{K}$, and $\pi_1(K,e)$  is naturally a subgroup of the center $\on{Cent}(\wt{K})$. 

\begin{theorem}\label{th:trans2}	Isomorphism classes of transitive Lie algebroids $A\Ra S^2$ with structure Lie algebra $\k$ are classified by 
	elements of the center of $\wt{K}$. In more detail:
	\begin{enumerate}
		\item 
		 There is a canonical group isomorphism, 
		\begin{equation}\label{eq:classification} \on{Tran}_\k(S^2,m_0)\stackrel{\cong}{\lra} \on{Cent}(\wt{K}),\ \ [A]\mapsto c(A).\end{equation} 
		\item If $\k$ is the Lie algebra of a connected Lie group $K$, then  the diagram 
		\begin{equation}\label{eq:commdiagram} \xymatrix{
			\on{Prin}_{K}(S^2,m_0)\ar[r]\ar[d]_\cong&  \on{Tran}_\k(S^2,m_0)\ar[d]_\cong\\
			\pi_1(K,e)\ar[r] & \on{Cent}(\wt{K}).}\end{equation}
	\end{enumerate}
commutes. 
\end{theorem}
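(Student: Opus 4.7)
My plan is to adapt the classical clutching construction for principal bundles over $S^2$ to the setting of transitive Lie algebroids. Pick a decomposition $S^2 = D_+ \cup D_-$ by two contractible open sets with $D_+ \ni m_0$, whose overlap $W = D_+ \cap D_-$ deformation retracts to the equator $S^1$. By Proposition \ref{prop:triv}, choose trivializations $\phi_+\colon A|_{D_+} \to T D_+ \times \k$ intertwining the framing at $m_0$, and $\phi_-\colon A|_{D_-} \to T D_- \times \k$ (any choice). The transition $g = \phi_+ \circ \phi_-^{-1}$ is a gauge transformation of $TW \times \k$, and the whole argument is to extract the desired invariant from $g$.

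Apply the Maurer--Cartan analysis inside the proof of Proposition \ref{prop:gauge} to $g$ on $W \simeq S^1$. Lifting to the universal cover $\wt W = \R$, the gauge is described by a pair $(\wt f, \Psi)$ with $\wt f\colon \R \to \wt K$ and $\Psi \in \on{Aut}(\k)$, related to the Maurer--Cartan data $(\theta, \Phi)$ by $\theta = -\wt f^*\theta^R$ and $\Phi = \Ad_{\wt f}\circ \Psi$. Define the monodromy $c = c(A) \in \wt K$ by $\wt f(t+1) = c \cdot \wt f(t)$. The descent condition $\Phi(t+1) = \Phi(t)$ forces $\Ad_c = \on{id}$ on $\k$; since $\wt K$ is connected this gives $c \in \on{Cent}(\wt K)$. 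Independence of choices is the main bookkeeping task: modifying $\phi_+$ by a gauge $(f_+, \on{id})$ with $f_+(m_0) = e$, or $\phi_-$ by any gauge $(f_-, \Psi_-)$, replaces $g$ by a product $g_+ \cdot g$ or $g \cdot g_-^{-1}$ where $g_\pm$ extend over simply connected hemispheres, hence lift to $\wt W$ without monodromy. A direct computation in the semidirect product $C^\infty(\wt W, \wt K) \rtimes \on{Aut}(\k)$ with group law $(a_1,\Psi_1)(a_2,\Psi_2) = (a_1\,\Psi_1(a_2),\, \Psi_1 \Psi_2)$, together with the centrality of $c$, shows that the monodromy is preserved. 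The residual ambiguity $\wt f \mapsto a \wt f$ in the normalization of the lift produces $c \mapsto aca^{-1}$, which is trivial since $c$ is central.

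For bijectivity: given any $c \in \on{Cent}(\wt K)$, choose a smooth $\wt f_0\colon \R \to \wt K$ with $\wt f_0(t+1) = c\, \wt f_0(t)$ by smoothly extending a path in $\wt K$ from $e$ to $c$, and glue $TD_\pm \times \k$ via the induced gauge on $W$ to realize $c$; for injectivity, two framed algebroids with equal monodromy are matched by adjusting $\phi_-$ so that the transitions agree on $W$, producing a framing-preserving isomorphism. Additivity under connected sum follows by concatenating the clutching data, so $c$ is a group homomorphism. I expect the main technical obstacle to be the independence verification, specifically that the $\on{Aut}(\k)$-component $\Psi_-$ of the $\phi_-$-ambiguity does not affect $c$; this reduces to the observation that such a modification contributes only through a monodromy-free factor. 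For part (b), a framed principal $K$-bundle over $S^2$ is classified by a clutching function $\gamma\colon S^1 \to K$ lifting to $\wt\gamma\colon \R \to \wt K$ with $\wt\gamma(0) = e$, whose class in $\pi_1(K,e) \subset \on{Cent}(\wt K)$ is $\wt\gamma(1)$. The induced Lie algebroid transition on $A(P)|_W$, by formula \eqref{eq:faction}, is exactly the gauge $(\wt\gamma, \on{id})$ on $\wt W$, so its monodromy is $\wt\gamma(1)$, and the diagram commutes.
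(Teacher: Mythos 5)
Your argument is correct, and it is a genuinely different packaging of the same underlying mechanism. You use the classical two-hemisphere clutching decomposition, apply the Maurer--Cartan analysis of Proposition \ref{prop:gauge} to the transition gauge transformation on the annulus $W$, and extract the invariant as the monodromy of the lift $\wt f$ to the universal cover, with centrality of $c$ forced by the descent condition on $\Phi=\Ad_{\wt f}\circ\Psi$; your identification of the one delicate point (that the $\on{Aut}(\k)$-ambiguity in $\phi_-$ enters only through a monodromy-free \emph{right} factor, while the framing forces the left factor to have trivial $\on{Aut}(\k)$-part) is exactly the bookkeeping that makes this work, and your treatment of part (b) is the standard comparison of clutching functions. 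The paper instead replaces $(S^2,m_0)$ by the square $\sI^2$ with a framing near $\partial\sI^2$, and compares two \emph{global} trivializations $\psi_\pm$ adapted to the two complementary pieces of the boundary; since $\sI^2$ is simply connected, Proposition \ref{prop:gauge} applies verbatim to produce a single function $f\in C^\infty(\sI^2,\wt K)$, and $c(A)$ is read off as the ratio of its (central) values on the two vertical sides --- no universal covers or monodromy in a semidirect product are needed. What the paper's formulation buys is that the group law becomes literal horizontal concatenation of squares, and the same square picture is reused directly in Propositions \ref{prop:hol} and \ref{prop:realize}; what your formulation buys is a more transparent parallel with the textbook classification of principal bundles over $S^2$, at the cost of somewhat heavier independence-of-choices verifications (in particular the injectivity step, where you must check that a transition with trivial monodromy and null-homotopic $\wt K$-component extends smoothly over the disk --- true since $\pi_1(\wt K)=1$, but worth writing out). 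Only two very minor slips, neither affecting validity: solutions of $\theta=-\wt f^*\theta^R$ differ by \emph{right} multiplication by a constant, so the residual ambiguity is $\wt f\mapsto \wt f a$ rather than $a\wt f$ (immaterial once $c$ is central), and the additivity under connected sum deserves a sentence relating the clutching data of $A'\# A$ to those of $A'$ and $A$.
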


\begin{proof}
For the proof, and much of our subsequent discussion, it is convenient to work with Lie algebroids over squares rather than spheres. By a  \emph{framing of $A\Ra \sI^2$ near 
$\partial \sI^2$}, we mean a germ of a trivialization $\phi\colon A|_U\cong TU\times \k$ on some neighborhood $U\supset \partial \sI^2$ of the boundary.  Denote by 
$\on{Tran}_\k(\sI^2,\partial \sI^2)$ the isomorphism classes of transitive Lie algebroids $A\Ra \sI^2$ with germs of framings
near $\partial \sI^2$. Then 
\[ \on{Tran}_\k(\sI^2,\partial \sI^2)=\on{Tran}_\k(S^2,m_0).\] 
To see this, choose any smooth map $\sI^2\to S^2$ taking the boundary $\partial \sI^2$ to the base point $m_0$, and inducing an orientation preserving diffeomorphism $\sI^2-\partial \sI^2\to S^2-\{m_0\}$. Such a map is unique up to isotopy, and identifies Lie algebroids over $\sI^2$ with framing near $\partial \sI^2$ with Lie algebroids over $S^2$ with framing near 
$m_0$. By the discussion in Section \ref{subsec:framings}, isomorphism classes of the latter may be used to define 
$\on{Tran}_\k(S^2,m_0)$.  
 
 Thanks to the framing, there are no special difficulties in dealing with a manifold with corners such as $\sI^2$. Working with squares makes the connected sum operation even simpler: $A'\# A$ is defined by placing the squares for $A',A$ next to each  other (horizontally), and concatenating. 

 Let $A\Ra \sI^2$ be a transitive Lie algebroid with a framing
 near $\partial \sI^2$. It represents a trivial element of $\on{Tran}_\k(\sI^2,\partial \sI^2)$ if and only if there exists a global 
 trivialization $\psi\colon A\cong T\sI^2\times \k$, extending the given framing near $\partial \sI^2$. 
In general, such a global trivialization does not exist. However,  we may always choose trivializations  
 \[ \psi_+,\ \psi_-\colon A\stackrel{\cong}{\lra} T\sI^2\times \k,\] 
 where $\psi_+$ extends the framing of $A$ on some neighborhood of the $\sqcup$ part of the boundary, 
 given as the union $(\{0\}\times \sI)\cup (\sI\times \{1\})\cup (\{1\}\times \sI) $,
% \begin{equation}\label{eq:sqcup} (\{0\}\times \sI)\cup (\sI\times \{1\})\cup (\{1\}\times \sI),\end{equation}
 	\begin{center}
 	%	\resizebox{2.6cm}{2.3cm}{
 	\resizebox{1.7cm}{!}{
 		\begin{tikzpicture}
 		\draw (0,0) -- (-1,0) ;
 		\draw[very thick] (-1,0) -- (-1,-1) ;
 		\draw[very thick] (0,-1) -- (-1,-1) ;
 		\draw[very thick]  (0,0) -- (0,-1) ;
 		\end{tikzpicture}
 	}
 \end{center}
(but $\psi_+$ may be different from the given framing near $\sI\times \{0\}$)
 while $\psi_-$ coincides with the framing  near 
the $\sqcap$ part of the boundary, $(\{0\}\times \sI)\cup (\sI\times \{0\})\cup (\{1\}\times \sI) $,
	\begin{center}
	%	\resizebox{2.6cm}{2.3cm}{
	\resizebox{1.7cm}{!}{
		\begin{tikzpicture}
		\draw[very thick] (0,0) -- (-1,0) ;
		\draw[very thick]  (-1,0) -- (-1,-1) ;
		\draw(0,-1) -- (-1,-1) ;
		\draw[very thick]  (0,0) -- (0,-1) ;
		\end{tikzpicture}
	}
\end{center}
%\begin{equation}\label{eq:sqcuphat}(\{0\}\times \sI)\cup (\sI\times \{0\})\cup (\{1\}\times \sI).\end{equation}
(but $\psi_-$ may be different from the given framing near $\sI\times \{1\}$).

The two trivializations $\psi_\pm$  are related by a gauge transformation of $T\sI^2\times \k$, which is trivial near the two vertical sides 
$ \{0\}\times \sI$ and $\{1\}\times \sI$. Proposition \ref{prop:gauge} shows that this gauge transformation is given by a function $f\in C^\infty(I^2,\wt{K})$, which 
is constant with values in $\on{Cent}(\wt{K})$ near the two vertical sides (so that $\Ad_f=1$ there). 
We let 
\begin{equation}\label{eq:def}
	 c(A)=f\big|_{\{1\}\times \sI}
\ (f\big|_{\{0\}\times \sI})^{-1}
\end{equation}
The definition is independent of the choices of  $\psi_\pm$. Indeed, any other choice is obtained  by composition with a gauge transformation by $g_\pm \in C^\infty(\sI^2,\wt{K})$ , equal to $e$ on some neighborhood of the 
$\sqcup$, respectively $\sqcap$ parts of the boundary.  But this does not affect the values of $f$ at the vertical sides.  Similarly, $c(A)$ depends only on the equivalence class of the framed Lie algebroid $A$, since we may carry along the
trivializations $\psi_\pm$ with any isomorphism of framed Lie algebroids. This defines the map 
\[\on{Tran}_\k(S^2,m_0)\to \on{Cent}(\wt{K}),\ [A]\mapsto c(A).\]
We now check its properties, using at various junctures that the function $f$ is determined only up to multiplication by a constant  in  $\on{Cent}(\wt{K})$.
\medskip

{\bf Group homomorphism.} Given $A',A\Ra \sI^2$,  the chosen trivializations
 $\psi'_\pm,\psi_\pm$,  of $A',A$ glue to  trivializations 
 $\psi'_+\#\psi_+,\ \psi'_-\#\psi_-$
 for the concatenation $A'\# A$.  We may choose the functions $f',f$ so that $f'\big|_{\{0\}\times \sI}=f\big|_{\{1\}\times \sI}$; 
for any such choice they glue to a transition function $f'\#f$.  It is then immediate from \eqref{eq:def} that 
$c(A'\#A)=c(A')c(A).$\medskip

{\bf Injectivity.} Suppose $c(A)=1$, so that $f\big|_{\{0\}\times \sI}=f\big|_{\{1\}\times \sI}$. We may take 
both values to be equal to $e$. Write $f=g_- g_+^{-1}$, where $g_+$ takes on the constant value $e$ near 
the $\sqcup$ part of the boundary, while $g_-$ takes on value $e$ near the $\sqcap$ part. Changing 
$\psi_+,\psi_-$ by $g_+,g_-$, respectively, we arrange that $f=e$, i.e. $\psi_+=\psi_-=:\psi$. 
That is, $\psi$ defines a global trivialization of $A\Ra  \sI^2$ extending the framing near  $\partial \sI^2$. We conclude that the framed Lie algebroid $A$ represents the trivial element of $\on{Tran}_\k(\sI^2,\partial\sI^2)$. \medskip

{\bf Surjectivity.} Let $c\in \on{Cent}(\wt{K})$ be given. Choose a path $k\colon \sI\to \wt{K}$, with sitting instances, from $k(0)=e$ to $k(1)=c$.  We take $A\Ra M$ to be the trivial Lie algebroid $T\sI^2\times \k$, with the trivial framing near the $\sqcup$ part of the boundary, but with the framing near the upper side $\sI\times \{0\}$ modified by gauge transformation by the function $(s,t)\mapsto k(s)$. Then $A\Ra \sI^2$ with this new framing near $\partial \sI^2$ represents $c(A)$. 
\medskip

If $\k=\on{Lie}(K)$ for a connected Lie group $K$, we can use a similar method for the classification of principal $K$-bundles $P\to \sI^2$ 
with germs of trivialization $P|_U\cong U\times K$ near $\partial \sI^2$. Choose a global trivialization 
$P\cong \sI^2\times K$ that agrees with the given one on some neighborhood of the $\sqcup$ part of the boundary. The resulting trivializations of $\iota^*P\to I$ are then related by a based gauge transformation, given by a loop $\lambda\colon \sI\to K$, and the class of this loop defines $c(P)$. Comparing with the construction for Lie algebroids,  the commutativity of the diagram in (b) is now clear.
\end{proof}

\begin{example}\label{ex:r}
If $\k=\R$, we have $\on{Cent}(\wt{K})=\R$, so
 \[ \on{Tran}_\R(S^2,m_0)\cong \R.\]
Letting $\omega$ be the standard symplectic form on $S^2$, the Lie algebroid corresponding to $\lambda \in \R$ is given by $A^{(\lambda)}=TS^2\times \R$ with the bracket,  
\[ [X+f,Y+g]=[X,Y]+\L_X g-\L_Y f+\lambda\,\omega(X,Y).\] 
\end{example}
\begin{example}
For $\k$ a semisimple Lie algebra, we have that 
\[ \on{Cent}(\wt{K})=\pi_1(K,e),\]
where $K=\wt{K}/\on{Cent}(\wt{K})$ is the adjoint form of $\wt{K}$. Consequently, the map
\[ \on{Prin}_K(S^2,m_0)\to \on{Tran}_\k(S^2,m_0)\]
is an isomorphism in this case. As a special case, we see that there is a unique 
non-trivial transitive Lie algebroid $A\Ra S^2$ with structure 
Lie algebra $\mf{su}(2)$, given as  the Atiyah algebroid of the non-trivial principal $\SO(3)$-bundle over $S^2$. 
%For $\k=\mf{sl}(2,\R)$, the isomorphism classes of transitive Lie algebroids are classified by $\on{Cent}(\wt{K})=\Z$, corresponding to the inequivalent $\on{PSL}(2,\R)$-principal bundles. 
\end{example}

%\begin{remark}\label{rem:important}	Here is another description of the invariant $c(A)$, using connections. 	Let $A\Ra \sI^2$ be a transitive Lie algebroid with framing near $\partial\sI^2$, and 	$j\colon T\sI^2\to A$ a connection, extending the given (flat) connection near $\partial\sI^2$. Given a trivialization $\psi\colon A\to T\sI^2\times\k$,  this becomes a  1-form $\alpha\in \Omega(\sI^2,\k)$, which we may think of as a connection 1-form on the trivial principal $\wt{K}$-bundle over $\sI^2$.  Let $\on{Hol}_\psi(\gamma)\in\wt{K}$ 	be its holonomy along a given path $\gamma\colon \sI\to \sI^2$. Changing $\psi$ by a gauge transformation $f$ changes the holonomy by multiplication by $f(\gamma(1))$ from the left and $f(\gamma(0))^{-1}$ from the right. 	If $\psi_\pm$ are as in the proof of Theorem \ref{th:trans2} (i.e., compatible with the framings near 	the $\sqcup$ and $\sqcap$ parts of the boundary),  we obtain 1-forms $\alpha_\pm \in \Omega(\sI^2,\k)$, related by the gauge transformation $f$ constructed in that proof. Hence,  if	$\gamma\colon \sI\to \sI^2$ is a path with $\gamma(0)\in \{0\}\times \sI$ and 	$\gamma(1)\in \{1\}\times \sI$, then the holonomies are related by 	%	 	\begin{equation}\label{eq:relation} 	 {\on{Hol}}_{\psi_-}(\gamma)=c(A)\ {\on{Hol}}_{\psi_+}(\gamma).\end{equation}	%\end{remark}

\section{Integration of transitive Lie algebroids}
\subsection{The monodromy groups}
Let  $A\Ra M$ be a transitive Lie algebroid. Let $\h=\ker(\a)$, and denote by $Z=\on{Cent}(\wt{H})$ the  group bundle
with fibers 
\[ Z_m= \on{Cent}(\wt{H}_m).\]
Recall from Lemma \ref{lem:conn} that 
the Lie algebra bundle $\mf{z}=\on{Cent}(\h)$ has a canonical flat connection; this induces a flat connection on the group bundle $Z$. That is, parallel transport defines an action of the fundamental groupoid $\Pi(M)\rra M$ on $Z$, preserving the group structure. 
\begin{definition}\label{def:monodromy}
		The \emph{monodromy map} of the transitive Lie algebroid $A$ at $m\in M$ 
		is the group homomorphism  
		\[  \delta_A\colon \pi_2(M,m)\to Z_m\]
		taking the homotopy class of a smooth, base point preserving map $f\colon S^2\to M$ 
		to the class $c(f^!A)$ 
		of the transitive Lie algebroid $f^!A\Ra S^2$.
		The image $\Lambda_m\subset Z_m$ of the monodromy map is called the \emph{monodromy group at $m$}.
\end{definition}
 We denote by $\Lambda\subset Z$ the union $\Lambda=\bigcup_{m\in M}\Lambda_m$.

\begin{remark}
\begin{enumerate}
	\item For possibly non-transitive Lie algebroids  $A\Ra M$, one defines the monodromy group  at $m\in M$ to be the 
	monodromy group of the transitive Lie algebroid $A_\Sigma\Ra \Sigma$ at $m$, where 
	$\Sigma\subset M$ 
	is the leaf of the singular foliation passing through $m$. 
	\item 	The monodromy groups were introduced by Crainic-Fernandes in \cite{cra:intlie}, using the notation 
	$\wt{{N}}_m(A)$.  The description in Section 3.2 of \cite{cra:intlie} is closest to the one in Definition \ref{def:monodromy}.
\item The monodromy maps are equivariant for the action of $\Pi(M)\rra M$ on the group bundles $Z$ 
and $\bigcup_m \pi_2(M,m)$. Thus, given a path $\gamma$ from $m$ to $m'$ we obtain a commutative diagram
\[ 
\xymatrix{ \pi_2(M,m) \ar[r]^{\delta_A} \ar[d]_{\gamma_*}& \ar[d]^{\gamma_*} Z_m\\
	\pi_2(M,m')\ar[r]_{\delta_A} & Z_{m'}}
\]
In particular, $\gamma_*$ takes $\Lambda_m$ to $\Lambda_{m'}$. 
\end{enumerate}
\end{remark}

\begin{proposition}\label{prop:necessary}
		A necessary condition for the integrability of a transitive Lie algebroid $A\Ra M$ is that the monodromy groups $\Lambda_m\subset Z_m$ at some (hence all) $m\in M$ are discrete. 
\end{proposition}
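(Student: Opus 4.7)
The plan is to show that if $A\Ra M$ admits an integration, then $\Lambda_{m_0}$ is discrete at a chosen base point $m_0$; equivariance of the monodromy under parallel transport in $Z$ (the remark following Definition~\ref{def:monodromy}) then yields discreteness of every $\Lambda_m$ in the connected component of $m_0$, and the argument applies componentwise. Given any integration $G\rra M$, we may pass to its source-identity subgroupoid to obtain a source-connected integration. Since $A$ has a single orbit, this $G$ is transitive, so the gauge-groupoid presentation lets us write $G = G(P)$ with $P = \sz^{-1}(m_0)$ a principal $K$-bundle over $M$ for the Hausdorff Lie group $K = H_{m_0}$. A framing $\h_{m_0}\cong \k$ identifies $\on{Lie}(K)\cong \k$, and $A = A(P)$.

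For any smooth based map $f\colon (S^2,\ast)\to (M,m_0)$, pullback of principal bundles commutes with the Atiyah construction, so $f^!A \cong A(f^*P)$ lies in the image of the map $\on{Prin}_{K^0}(S^2,\ast)\to \on{Tran}_\k(S^2,\ast)$. (The identity component $K^0$ suffices here, since every principal $K$-bundle over the simply connected base $S^2$ reduces to a $K^0$-bundle.) By the commutative diagram of Theorem~\ref{th:trans2}(b), the class $c(f^!A)$ then lies in the image of the homomorphism $\pi_1(K^0,e)\to \on{Cent}(\wt{K})$.

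Unwinding the clutching description in the proof of Theorem~\ref{th:trans2}, this homomorphism is the standard one sending a based loop $\lambda$ in $K^0$ to the endpoint of its lift to a path in $\wt{K}$ starting at the identity; its image is precisely $\ker(\wt{K}\to K^0)$, a discrete central subgroup of $\wt{K}$. Consequently $\Lambda_{m_0}\subset \ker(\wt{K}\to K^0)$ is contained in a discrete subgroup of $Z_{m_0} = \on{Cent}(\wt{K})$, hence is itself discrete.

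The main obstacle is essentially bookkeeping: one must identify the abstract homomorphism $\pi_1(K,e)\to \on{Cent}(\wt{K})$ of diagram~\eqref{eq:commdiagram} with the concrete kernel inclusion, and handle the potential disconnectedness of $K$ by passing to the identity component. Beyond these points, the proof is the combination of Theorem~\ref{th:trans2}(b) with the standard presentation of a transitive Lie groupoid as a gauge groupoid.
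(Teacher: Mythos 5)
Your proof is correct and follows essentially the same route as the paper: present the integration as a gauge groupoid $G(P)$ with $P=\sz^{-1}(m_0)$, use $f^!A=A(f^*P)$ together with the commutative diagram of Theorem \ref{th:trans2}(b) to conclude that $\Lambda_{m_0}$ lies in the image of $\pi_1(H_{m_0},e)\to Z_{m_0}$, and observe that this image is discrete. The additional care you take in reducing to the source-connected integration, passing to the identity component of $K$, and identifying the image with $\ker(\wt{K}\to K^0)$ only makes explicit what the paper's proof leaves implicit.
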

\begin{proof}
	Suppose $A\Ra M$ is integrable to a Lie groupoid $G\rra M$, and fix $m\in M$. Since $G$ is necessarily transitive, 
	it is the gauge groupoid
	$G=G(P)$ of the principal $K=H_m$-bundle $P=\sz^{-1}(m)$; consequently $A=A(P)$ is the Atiyah algebroid of that principal bundle.   
	Given a smooth base-point preserving map 
	$f\colon S^2\to M$, we have that $f^!A=A(f^*P)$. By the commutative diagram in Theorem \ref{th:trans2}, the 
	element  $\delta_A([f])=c(f^!A)\in Z_m$ must lie in $\pi_1(H_m,e)\subset Z_m$. 
 This shows that 
 \[ \Lambda_m\subset \pi_1(H_m,e)\subset Z_m.\] 
In particular $\Lambda_m$ must be discrete. 
\end{proof}

\subsection{Holonomies}
Let us now assume that the necessary condition from Proposition \ref{prop:necessary} is satisfied, so that all the monodromy groups $\Lambda_m$ are discrete. Then 
\[ \Lambda\subset Z=\on{Cent}(\wt{H})\subset \wt{H}\] 
are  Lie group bundles 
over $M$. Discreteness of the monodromy groups guarantees that 
\[ \mathsf{U}=\wt{H}/\Lambda,\] 
with fibers $  \mathsf{U}_m=\wt{H}_m/\Lambda_m$, is a well-defined Lie group bundle. 
%The $\mathsf{U}_m$ will eventually be the identity components of the  source-simply connected integration of $A$.  
\medskip

Our construction of an integration of the  transitive Lie algebroid $A\Ra M$  will use the choice of a splitting $j\colon TM\to A$.
Recall from Lemma \ref{lem:conn} that  $j$ defines a linear connection $\nabla$ on the Lie algebra bundle $\h$, 
extending the canonical flat connection on $\mf{z}$. The resulting  connection on the group bundle $\wt{H}$ 
extends the canonical  \emph{flat} connection on $Z$, and hence descends to a connection on $\mathsf{U}$. We shall denote the parallel transport along $\gamma\colon \sI \to M$ 
on any of these bundles by $\gamma_*$. As usual, it depends only on the thin homotopy class of $\gamma$, and hence defines an action of the path groupoid $\on{Path}(M)\rra M$ on these bundles. In particular, we have a groupoid action 
\[ \on{Path}(M)\times_M \mathsf{U}\to \mathsf{U}.\]
Restricting to isotropy groups,  we obtain a morphism of group bundles 
\[ \on{Loop}(M)\to \on{Aut}(\mathsf{U}),\ \ \ [\lambda]_{\on{thin}}\mapsto \lambda_*.\]
On the subgroup  bundle $\on{Loop}^0(M)$ 
of thin homotopy classes of  contractible loops, the automorphisms
$\lambda_*$ are inner: 
\begin{proposition}\label{prop:hol}
	   Let $A\Ra M$ be a transitive Lie algebroid. Any choice of splitting $j\colon TM\to A$ determines a morphism of group bundles 
	   \[ \Hol\colon \on{Loop}^0(M)\to \mathsf{U},\ \ [\lambda]_{\on{thin}}\mapsto \Hol(\lambda)\]
	   such that 
	   :
		\begin{enumerate}
			\item For all $[\lambda]_{\on{thin}}\in  \on{Loop}^0(M)_m$, the automorphism $\lambda_*\in \on{Aut}( \mathsf{U}_m)$ is conjugation by 
			$\on{Hol}(\lambda)$. 
			\item The map $\Hol\colon \on{Loop}^0(M)\to \mathsf{U}$ is equivariant for the action of $\on{Path}(M)$ on both bundles. That is, 
			for $[\lambda]_{\on{thin}}\in  \on{Loop}^0(M)$, and 
			$[\gamma]_{\on{thin}}$ with $\gamma(0)=\lambda(0)$, 
			\[ \on{Hol}(\gamma\ast \lambda\ast \gamma^{-1})=
			\gamma_*\big(\on{Hol}(\lambda)\big).\] 
		\end{enumerate}
\end{proposition}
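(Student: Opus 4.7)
The plan is to define $\Hol(\lambda)$ via a null-homotopy of $\lambda$, first as an element of $\wt H_m$ depending on auxiliary choices, and then to show the ambiguity lies in $\Lambda_m$ so that the class in $\mathsf{U}_m$ is intrinsic. Given $[\lambda]_{\on{thin}}\in\on{Loop}^0(M)_m$, pick a representative with sitting instances and a null-homotopy $h\colon\sI^2\to M$ with $h(s,0)=\lambda(s)$ and $h\equiv m$ on the remaining three sides of $\partial\sI^2$. The pullback $h^!A$ carries a canonical framing near the $\sqcap$ part of the boundary (coming from the identification $\ker(\a)|_m\cong\h_m$), and by the same argument as Proposition \ref{prop:triv} this extends to a global trivialization $\psi\colon h^!A\stackrel{\cong}{\to}T\sI^2\times\h_m$. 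Under $\psi$ the pullback splitting $h^*j$ has the form $v\mapsto v+\iota_v\alpha$ for a unique $\alpha\in\Omega^1(\sI^2,\h_m)$ vanishing near the $\sqcap$ part. I then set $\Hol_h(\lambda):=g(1)\in\wt H_m$, where $g\colon\sI\to\wt H_m$ solves $\dot g(t)=-\alpha(\dot\lambda(t))\,g(t)$ with $g(0)=e$; equivalently, $g(1)$ is the holonomy around $\partial\sI^2$ of the (non-flat) $\wt H_m$-connection $\alpha$, only the bottom side contributing since $\alpha$ vanishes elsewhere.

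Next I would check that this is independent of $\psi$ and depends on $h$ only modulo $\Lambda_m$. Two extensions $\psi,\psi'$ differ by a gauge transformation of $T\sI^2\times\h_m$ that is the identity on the $\sqcap$ part; by Proposition \ref{prop:gauge} (modulo its central kernel) such a transformation is given by a function $f\colon\sI^2\to\wt H_m$ with $f|_{\sqcap}=e$. Under \eqref{eq:faction} the 1-form transforms as $\alpha\mapsto\Ad_f\alpha-f^*\theta^R$, and a short calculation identifies the new ODE solution as $g'=fg$; since $f$ is $e$ at the two corners $(0,0)$ and $(1,0)$, one has $g'(1)=g(1)$. For a second null-homotopy $h'$, gluing $h$ and $h'$ along their common boundary produces a pointed map $\tilde h\colon S^2\to M$, and the key identification
\[\Hol_h(\lambda)\cdot\Hol_{h'}(\lambda)^{-1}=c(\tilde h^!A)\in Z_m\]
follows by matching the transition-function description \eqref{eq:def} of $c(\tilde h^!A)$ from Theorem \ref{th:trans2} (using the two hemispheres of $S^2$) with the pair $\psi,\psi'$. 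By Definition \ref{def:monodromy} this lies in $\Lambda_m$, so $\Hol(\lambda):=[\Hol_h(\lambda)]\in\mathsf{U}_m$ is well-defined; independence of the thin-homotopy class of $\lambda$ follows by the same argument using a thin homotopy as a bridge between null-homotopies.

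Property (a) is essentially built into the construction: under $\psi$ the connection $\nabla=[j(\cdot),\cdot]$ on $\h$ becomes $d+\ad_\alpha$, whose parallel transport along $\lambda$ is $\Ad_{g(1)}$ on $\h_m$, i.e., conjugation by $\Hol_h(\lambda)$ on $\wt H_m$; this descends to conjugation by $\Hol(\lambda)$ on $\mathsf{U}_m$. The group homomorphism property is obtained by concatenating null-homotopies horizontally, so that the ODE solutions multiply accordingly. The equivariance (b) follows from the naturality of the whole construction under pullback by a path $\gamma$: conjugating a null-homotopy of $\lambda$ by $\gamma$ yields a null-homotopy of $\gamma\ast\lambda\ast\gamma^{-1}$ whose $\alpha$ and ODE solution transform exactly as required. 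I expect the main technical obstacle to be the $S^2$-comparison step in the second paragraph, which requires a careful unpacking of the definition of $c$ in Theorem \ref{th:trans2} and its matching to the ratio of two ODE-holonomies coming from different trivializations.
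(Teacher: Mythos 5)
Your proposal is correct and follows essentially the same route as the paper: define the holonomy via a trivialization of the pullback algebroid induced by a null-homotopy, show that the ambiguity in the choice of null-homotopy is measured by the class $c$ of the glued sphere map and hence lies in $\Lambda_m$ (so the element of $\mathsf{U}_m$ is intrinsic), and deduce (a) and (b) from naturality under concatenation and conjugation by paths. The only difference is organizational --- the paper first defines $\Hol_\phi(\lambda)$ for arbitrary loops using a trivialization of $\lambda^!A$ over $\sI$ and only then restricts to contractible loops and trivializations extending over a retraction, whereas you work with the two-dimensional trivialization from the outset and make the gauge-transformation/ODE bookkeeping more explicit --- and the step you flag as the crux (matching the ratio of holonomies with the transition-function definition of $c$) is treated at essentially the same level of detail in the paper itself.
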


\begin{proof}
Given any  loop $\lambda\colon\sI\to M$ with sitting instances, based at $m$, representing an element of 	$\on{Loop}(M)_m$, the
pullback Lie algebroid $\lambda^!A\Ra \sI$ comes with a natural framing near $\partial\sI=\{0,1\}$. The choice of a trivialization  
\begin{equation}\label{eq:phitrivialization} \phi\colon \lambda^!A\cong T\sI\times \h_m,\end{equation} 
compatible with the given framing near $\partial\sI$, determines an identification of $\lambda^!A$ with the Atiyah algebroid 
of the (trivial) principal bundle $\sI\times \sU_m$. The splitting of $A\Ra M$ pulls back to a splitting of 
$\lambda^!A$, hence defines a connection on this principal bundle. We may thus consider the holonomy (parallel transport) $\Hol_\phi(\lambda)\in \sU_m$, where the subscript indicates the dependence on $\phi$. 
By construction, the
automorphism $\lambda_*\in \on{Aut}(\sU_m)$ is conjugation by this element, and under concatenation of loops, 
\[ \Hol_{\phi'*\phi}(\lambda'*\lambda)=\Hol_{\phi'}(\lambda')\Hol_\phi(\lambda).\]
Given a path $\gamma$ (with sitting instances) from $m$ to $m'$, we obtain an isomorphism 
$\gamma_*\colon \h_m\cong \h_{m'}$. The pullback splitting  of $\gamma^!A\Ra \sI$ 
defines unique trivializations $\gamma^!A\cong T\sI\times \h_m\cong T\sI\times \h_{m'}$ so that the corresponding connection 1-form is zero. By concatenating with the trivialization $\phi$ of $\lambda^! A$, this 
determines a trivialization $\ti{\phi}$ of $\ti{\lambda}^!A\Ra I$, where 
$\ti{\lambda}=\gamma*\lambda*\gamma^{-1}\colon \sI\to M$ is a loop based at $m'$, with 
\[ \Hol_{\ti{\phi}}(\gamma*\lambda*\gamma^{-1})=\gamma_*\Hol_\phi(\lambda).\]

In general, $\Hol_\phi(\lambda)\in \sU_m$ depends on the choice of the trivialization $\phi$ of $\lambda^!A\Ra I$. Suppose however that the loop $\lambda$ is contractible. Then we may restrict attention to those trivializations which extend over a deformation retraction 
of $\lambda$. We will show that $\Hol_\phi(\lambda)$ becomes independent of $\phi$ within this restricted class of trivializations. In a nutshell, this uses the fact that  any two retractions of $\lambda$ combine into a map $f\colon S^2\to M$, and $\Lambda_m=\pi_1(\mathsf{U}_m,e)$ was defined in terms of such maps.  

In more detail, if $\lambda$ is a contractible loop with sitting instances we may choose a homotopy $h\colon \sI^2\to M$, with sitting instances, between $h(\cdot,0)=\lambda$ and the constant path $h(\cdot,1)=m$. 
	\begin{center}
		%	\resizebox{2.6cm}{2.3cm}{
		\resizebox{3cm}{!}{
			\begin{tikzpicture}
			\begin{scope}[decoration={
				markings,
				mark=at position 0.5 with {\arrow{>}}}
			] 
			\draw[postaction={decorate}] (0,0) -- (-1,0) node[above] at (-0.5,0){\tiny $\lambda$};
			\draw[postaction={decorate}] (-1,0) -- (-1,-1) node[left] at (-1,-0.5){\tiny $m$};
			\draw[postaction={decorate}] (0,-1) -- (-1,-1) node[right] at (0,-0.5){\tiny $m$};
			\draw[postaction={decorate}] (0,0) -- (0,-1) node[below] at (-0.5,-1){\tiny $m$};
			\coordinate [label=center:{\tiny $h$}]  (h) at (-0.5,-0.5);
			\end{scope}
			\end{tikzpicture}
		}
	\end{center}
	Since $h$ takes some neighborhood  of the $\sqcup$ part of the boundary $\partial\sI^2$ to $m$, 
	the pullback Lie algebroid  $h^!A\Rightarrow \sI^2$  comes with a natural framing on this neighborhood. Choose a global trivialization 
	$\psi\colon h^!A\cong T\sI^2\times \h_m$, extending the framing on a possibly smaller neighborhood.
	This determines an identification of $h^!A$ with the Atiyah algebroid of the trivial principal bundle $\sI^2\times \mathsf{U}_m$. 	By pulling back to $\sI\times \{0\}$, we obtain the trivialization \eqref{eq:phitrivialization} of 
	$\lambda^!A$. This identifies $\Hol_\phi(\lambda)$ with the holonomy of the pull-back connection on $\sI^2\times \sU_m$.  
	
	Now let  $h'$ be another homotopy between $\lambda$ and the constant path. Gluing the homotopy $h$ with the `vertical inverse' of $h'$, we obtain a smooth map 
    \[ \sigma\colon \sI^2\to M,\ \ \sigma(s,t)=\begin{cases} h'(s,1-2t)& 0\le t\le \f{1}{2}\\
    h(s,2t-1)& \hh\le t\le 1\end{cases}\]
    \begin{center}
    	\resizebox{3.7cm}{!}{
    		\begin{tikzpicture}
    		\draw[black] (-2,0) -- (0,0) node[above] at (-1,0){\tiny $m$};
    		\draw[black] (-2,0) -- (-2,-2) node[left] at (-2,-0.5){\tiny $m$}node[left] at (-2,-1.5){\tiny $m$};
    		\draw[black] (-2,-1) -- (0,-1) node at (-0.7,-1.2){\tiny $\lambda$};
    		\draw[black] (-2,-2) -- (0,-2) node[right] at (0,-0.5){\tiny $m$}  node[right] at (0,-1.5){\tiny $m$} ;
    		\draw[black] (0,0) -- (0,-2) node[below] at (-1,-2){\tiny $m$};
    		\coordinate [label=center:{\tiny $h'$}]  (h1) at (-1,-0.45);
    		\coordinate [label=center:{\tiny $h$}]  (h2) at (-1,-1.55);
    		\end{tikzpicture}
    	}
    \end{center}
    %The map $\lambda$ is realized as the $
    %\sI\to \sI^2,\ s\mapsto (s,\hh)$ as the middle horizontal line, followed by $\sigma$. 
    By definition of the monodromy groups, 
    \[ c(\sigma^!A)=\delta_A([\sigma])\in \Lambda_m=\pi_1(\sU_m).\]
    This element determines an isomorphism class of framed principal 
    $\sU_m$-bundles $P\to \sI^2$ (with framing along $\partial\sI^2$) such that $c(P)=c(\sigma^!A)$, i.e. 
    $\sigma^!A\cong A(P)$. The principal bundle $P$ inherits a principal connection from the pullback splitting of $\sigma^!A$. $\Hol_\phi(\lambda)$ is interpreted as a holonomy of this pullback connection, and in particular 
    does not depend on the choice of $\phi$. 
\end{proof}

We shall also need the following relation between holonomies and the monodromy groups. 

\begin{proposition}\label{prop:realize}
	Every element of $\Lambda_m=\pi_1(\mathsf{U}_m,e)$ may be realized as a loop (with sitting instances)
	\[ \sI\to \mathsf{U}_m\ \ \tau\mapsto \Hol(\lambda_\tau),\] 
	where 	
	$\lambda_\tau\colon \sI\to M$ 
	is a smooth family of loops (with sitting instances) based at $m$, with $\lambda_0,\lambda_1$ the constant loops 
	at $m$. 
\end{proposition}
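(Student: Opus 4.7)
The plan is to represent the given element of $\Lambda_m$ as $\delta_A([\sigma])$ for a smooth map $\sigma\colon(\sI^2,\partial\sI^2)\to(M,m)$, which after a homotopy rel $\partial\sI^2$ we may assume is identically $m$ on a neighborhood of $\partial\sI^2$. Defining $\lambda_\tau(s)=\sigma(s,\tau)$ for $\tau\in\sI$, each $\lambda_\tau$ is then a loop at $m$ with sitting instances, $\lambda_0$ and $\lambda_1$ are the constant loop, and the restriction of $\sigma$ to $\sI\times[\tau,1]$ furnishes a contracting homotopy of $\lambda_\tau$ with sitting instances. Proposition \ref{prop:hol} therefore produces $\Hol(\lambda_\tau)\in\sU_m$. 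Smoothness in $\tau$ will follow by computing $\Hol(\lambda_\tau)$ in a single trivialization of $\sigma^!A$ (see below), and $\Hol(\lambda_\tau)=e$ for $\tau$ near $\{0,1\}$ since $\lambda_\tau$ is then constant; hence the assignment $\tau\mapsto\Hol(\lambda_\tau)$ is a smooth loop in $\sU_m$ based at $e$, and the task reduces to identifying its class in $\pi_1(\sU_m,e)=\Lambda_m$ with $c(\sigma^!A)=\delta_A([\sigma])$.

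For this identification, I would pull back $A$ and the splitting $j$ by $\sigma$, and following the proof of Theorem \ref{th:trans2}, choose two trivializations
\[ \psi_-,\ \psi_+\colon \sigma^!A\stackrel{\cong}{\lra} T\sI^2\times\h_m \]
extending the pullback framing on neighborhoods of the $\sqcap$ and $\sqcup$ parts of $\partial\sI^2$, respectively. Each induces a principal connection on the trivial bundle $\sI^2\times\wt H_m$, which descends to $\sI^2\times\sU_m$. Because $\sigma$ is constant near $\partial\sI^2$, the pullback splitting itself is trivial there; consequently the $\psi_-$-connection is trivial near the $\sqcap$ part of the boundary and the $\psi_+$-connection is trivial near the $\sqcup$ part. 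Write $H(s,\tau)\in\wt H_m$, respectively $H'(s,\tau)$, for the horizontal parallel transport from $(0,\tau)$ to $(s,\tau)$ in the $\psi_-$-, respectively $\psi_+$-connection. Interpreting $\Hol$ from Proposition \ref{prop:hol} using the restriction of $\psi_-$ to the slice $\sI\times\{\tau\}$ as a trivialization of $\lambda_\tau^!A$, the projection of $H(1,\tau)$ to $\sU_m$ equals $\Hol(\lambda_\tau)$. Triviality near the $\sqcap$ part forces $H(s,0)=e$ (in particular $H(1,0)=e$), making $\tau\mapsto H(1,\tau)$ the unique lift of the loop $\tau\mapsto\Hol(\lambda_\tau)$ to $\wt H_m$ starting at $e$; consequently $H(1,1)\in\Lambda_m$ represents the class of the loop in $\pi_1(\sU_m,e)$. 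Symmetrically, $H'(s,1)=e$.

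To conclude, the transition gauge transformation $f\in C^\infty(\sI^2,\wt H_m)$ between the two trivializations (Proposition \ref{prop:gauge}) is, by the proof of Theorem \ref{th:trans2}, constant with values in $\on{Cent}(\wt H_m)$ on each vertical side. The standard transformation rule for parallel transport under gauge changes reads $H(s,\tau)=f(s,\tau)\cdot H'(s,\tau)\cdot f(0,\tau)^{-1}$; evaluating at $(s,\tau)=(1,1)$ and combining with $H'(1,1)=e$ and the centrality of $f$ on the vertical sides yields
\[ H(1,1)=f|_{\{1\}\times\sI}\cdot\big(f|_{\{0\}\times\sI}\big)^{-1}=c(\sigma^!A) \]
by formula \eqref{eq:def}, as required. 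The hard part will be the bookkeeping needed to arrange sitting-instance conventions coherently so that the pullback splitting is genuinely trivial near $\partial\sI^2$ in each chosen trivialization (making the equalities $H(s,0)=e$ and $H'(s,1)=e$ hold on the nose), together with checking the precise sign conventions for the gauge-transformation law for horizontal holonomies in the principal $\wt H_m$-bundle $\sI^2\times\wt H_m$; once these are settled, the identification with $c(\sigma^!A)$ is immediate from the definition.
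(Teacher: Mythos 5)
Your proposal is correct and follows essentially the same route as the paper: realize the class as $c(\sigma^!A)$, slice $\sigma$ into loops $\lambda_\tau$, and lift $\tau\mapsto\Hol(\lambda_\tau)$ to $\wt{H}_m$ via the $\psi_-$-trivialization, identifying the endpoint of the lift with $c(\sigma^!A)$ by comparing with the $\psi_+$-trivialization. The paper compresses your gauge-transformation bookkeeping into the single identity $\Hol_{\psi_-}(\lambda_\tau)=c\,\Hol_{\psi_+}(\lambda_\tau)$ (valid for all $\tau$ because the transition function is central and constant near the vertical sides), but the content is the same.
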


\begin{proof}
	By definition of the monodromy groups, every element of $\Lambda_m$ is realized as 	$c=\delta_A([\sigma])$
	for some smooth map 
	$\sigma\colon \sI^2\to M$, taking a neighborhood of $\partial\sI^2$ to $m$. Let $\lambda_\tau(s)=\sigma(s,\tau)$. For any fixed $\tau$, this is a loop in $M$. Using trivializations $\psi_+$ and $\psi_-$ of 
	$\sigma^!A\Ra \sI^2$, compatible with the framing on a neighborhood of the 
$\sqcup$ and $\sqcap$ 	
 part of the boundary, respectively, we obtain $\wt{H}_m$-valued holonomies related by
	\[ \Hol_{\psi_-}(\lambda_\tau)=c\,\Hol_{\psi_+}(\lambda_\tau).\]
Their images in $\mathsf{U}_m$ are $\Hol(\lambda_\tau)$, by definition. Since $\Hol_{\psi_-}(\lambda_0)=e$ and 
$\Hol_{\psi_-}(\lambda_1)=c\Hol_{\psi_+}(\lambda_1)=c$, this proves the proposition. 
\end{proof}

\subsection{Construction of an integration}
We are now in position to construct the integration of a transitive Lie algebroid $A\Ra M$ with discrete monodromy groups. The argument is motivated by the re-construction of a line bundle with connection \cite{pr:lo}, or more generally of principal bundle with connection \cite{bar:hol,mac:ho,sch:par}, from its parallel transport. 

As explained above, the splitting $j\colon TM\to A$ of $A\Ra M$ defines an action of the 
path groupoid $\on{Path}(M)\rra M$ on the group bundle $\mathsf{U}\to M$, preserving the group structure. Hence, we can form the semi-direct product groupoid %(see Appendix \ref{app:A})
\[ \on{Path}(M)\ltimes \mathsf{U} \rra M.\]
As a space, it consists of pairs $([\gamma]_{\on{thin}},u)$, where 
$\gamma$ is a smooth path with sitting instances, and 
$u\in \mathsf{U}_{\gamma(0)}$. The groupoid multiplication is given by 
\begin{equation} \label{eq:product}
 ([\gamma']_{\on{thin}},u')\circ ([\gamma]_{\on{thin}}  ,u)=\big([\gamma'\ast\gamma]_{\on{thin}},\ ((\gamma^{-1})_*u') u\big),\end{equation}
%\[ [(\gamma',u')]\circ [(\gamma,u)]=\big[\big(\gamma'\ast\gamma,\ ((\gamma^{-1})_*u') u\big)\big],\]
for paths $\gamma$ from $m$ to $m'$, $\gamma'$ from $m'$ to $m''$, and elements $u\in \mathsf{U}_{m},\ u'\in \mathsf{U}_{m'}$.  
We will define $G\rra M$ as a quotient of this groupoid by  $\on{Loop}^0(M)$, embedded diagonally as a normal subgroupoid bundle, by the map 
\begin{equation}\label{eq:embedding}
[\lambda]_{\on{thin}}\mapsto  ([\lambda]_{\on{thin}},\Hol(\lambda^{-1})\big).
\end{equation}

\begin{theorem}\label{th:explicitint}
		A transitive Lie algebroid $A\Ra M$ is integrable to a Lie groupoid if and only if the monodromy groups
		$\Lambda_m$ are discrete. In this case, the source-simply connected Lie groupoid 
		$G\rra M$ integrating $A\Ra M$ is the quotient  
		\[ G=(\on{Path}(M)\ltimes \mathsf{U})/\on{Loop}^0(M)\]
		where $\on{Loop}^0(M)$ is embedded diagonally. 
	%	where$\on{Loop}^0(M)$ acts as	\[ [\lambda]_{\on{thin}}\cdot([\gamma]_{\on{thin}}  ,u) 	=([\gamma]_{\on{thin}}  ,u) \circ ([\lambda]_{\on{thin}},\Hol(\lambda^{-1})^{-1}.\]
%		([\gamma]_{\on{thin}}  [\lambda]_{\on{thin}}^{-1} ,u 		\Hol(\lambda)).\]
	%	under the following equivalence relation: 
	%	\[ ([\gamma_1]_{\on{thin}},u_1)\sim ([\gamma_0]_{\on{thin}},u_0) \ \Leftrightarrow \ 
	%	\gamma_1\simeq\gamma_0,\ \ u_1= \on{Hol}(\gamma_1^{-1}\ast \gamma_0) \ u_0  .\]
\end{theorem}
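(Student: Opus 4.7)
The ``only if'' direction is already Proposition~\ref{prop:necessary}, so the work lies entirely in constructing the integration when all $\Lambda_m$ are discrete.  My plan is to proceed in four main steps.

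\emph{Step 1: Verify that \eqref{eq:embedding} defines a normal subgroupoid bundle of $\on{Path}(M)\ltimes \mathsf{U}$.} I would first check that $[\lambda]_{\on{thin}}\mapsto ([\lambda]_{\on{thin}},\on{Hol}(\lambda^{-1}))$ is a homomorphism. Using the product \eqref{eq:product} for two composable contractible loops $\lambda,\lambda'$ based at $m$, one computes
\[
([\lambda']_{\on{thin}},\on{Hol}(\lambda'^{-1}))\circ([\lambda]_{\on{thin}},\on{Hol}(\lambda^{-1}))=\big([\lambda'\ast\lambda]_{\on{thin}},\ (\lambda^{-1})_*\on{Hol}(\lambda'^{-1})\cdot\on{Hol}(\lambda^{-1})\big),
\]
and Proposition~\ref{prop:hol}(a) converts $(\lambda^{-1})_*$ into conjugation by $\on{Hol}(\lambda^{-1})$, whence the right component collapses to $\on{Hol}(\lambda^{-1})\on{Hol}(\lambda'^{-1})=\on{Hol}((\lambda'\ast\lambda)^{-1})$. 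Normality follows similarly from Proposition~\ref{prop:hol}(b): conjugating $([\lambda]_{\on{thin}},\on{Hol}(\lambda^{-1}))$ by an arbitrary $([\gamma]_{\on{thin}},u)$ yields $([\gamma\ast\lambda\ast\gamma^{-1}]_{\on{thin}},\on{Hol}((\gamma\ast\lambda\ast\gamma^{-1})^{-1}))$ after using the equivariance $\on{Hol}(\gamma\ast\lambda\ast\gamma^{-1})=\gamma_*\on{Hol}(\lambda)$ and the semidirect product relation. Thus the set-theoretic quotient $G$ is a groupoid over $M$.

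\emph{Step 2: Endow $G$ with a smooth structure via the fibration $G\to\Pi(M)$.} Since $\on{Path}(M)/\on{Loop}^0(M)=\Pi(M)$, projecting to the first component gives a surjection $G\to\Pi(M)$. A direct check (using that no non-trivial thin loop can be contracted without changing the $\mathsf{U}$-component, thanks to discreteness of $\Lambda$) shows that the fiber over $[\gamma]\in\Pi(M)$ with $\sz[\gamma]=m$ is exactly $\mathsf{U}_m$; more precisely, $G$ is canonically identified with the fiber product $\Pi(M)\times_M\mathsf{U}$ as a set, where $\Pi(M)\to M$ is the source map and $\mathsf{U}\to M$ the bundle projection. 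This gives $G$ the structure of a fiber bundle over the (smooth, possibly non-Hausdorff) manifold $\Pi(M)$ with fiber the Lie group bundle $\mathsf{U}$; the transition between local charts uses parallel transport (inner to $\mathsf{U}$) coming from $j$, so it is smooth. One then checks that source, target, multiplication and inversion are smooth in these charts; this is essentially the same verification as for the fundamental groupoid combined with the smoothness of the $\on{Path}(M)$-action on $\mathsf{U}$.

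\emph{Step 3: Identify $\on{Lie}(G)\cong A$.} The bundle structure $G\to\Pi(M)$ with fiber $\mathsf{U}_m$ yields a short exact sequence $0\to \h\to\on{Lie}(G)\to TM\to 0$ matching \eqref{eq:exactla}, with the kernel identification using $\on{Lie}(\mathsf{U})=\h$ and the target identification using $\on{Lie}(\Pi(M))=TM$. The splitting $j\colon TM\to A$ induces a splitting of $\on{Lie}(G)$ coming from the $j$-defined action of $\on{Path}(M)$ on $\mathsf{U}$ (differentiate the parallel transport along a tangent vector). To finish, one must check that the Lie bracket on $\on{Lie}(G)$ restricted to $j$-lifts and to $\h$ reproduces the bracket in $A$. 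The bracket on sections of $\h\subset\on{Lie}(G)$ is the pointwise Lie algebra bracket (since $\mathsf{U}$ is a Lie group bundle), matching $A$; the mixed bracket $[j(X),\sigma]$ in $\on{Lie}(G)$ equals $\nabla_X\sigma$ by construction of the connection on $\mathsf{U}$ induced from $j$, and $\nabla_X\sigma=[j(X),\sigma]$ in $A$ by Lemma~\ref{lem:conn}. The bracket $[j(X),j(Y)]-j([X,Y])$ is curvature on both sides and coincides by direct computation. Hence the identity on $\h$ and on $TM$ lifts to a Lie algebroid isomorphism $A\cong\on{Lie}(G)$ by the five-lemma-style argument, using that the candidate map is a Lie algebroid morphism on $j$-lifts and sections of $\h$.

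\emph{Step 4: Source-simple connectedness.} The source fiber $\sz^{-1}(m)\subset G$ is a $\mathsf{U}_m$-bundle over the source fiber of $\Pi(M)$ at $m$, which is the universal cover of $M$ based at $m$, hence simply connected. From the long exact homotopy sequence, $\pi_1(\sz^{-1}(m))$ surjects onto $\pi_1(\mathsf{U}_m)=\Lambda_m$, so I must show every element of $\Lambda_m$ is null-homotopic in $\sz^{-1}(m)$. By Proposition~\ref{prop:realize}, any element of $\Lambda_m$ arises as $\tau\mapsto\on{Hol}(\lambda_\tau)$ for a family of based loops $\lambda_\tau$ in $M$ with $\lambda_0,\lambda_1$ constant. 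The family $\tau\mapsto[([\lambda_\tau]_{\on{thin}},e)]\in G$ lies in $\sz^{-1}(m)$, starts and ends at the unit, represents the given class in $\pi_1(\mathsf{U}_m)$ (unwinding the identification $([\lambda_\tau]_{\on{thin}},e)\sim([m]_{\on{thin}},\on{Hol}(\lambda_\tau))$ inside each equivalence class), and is explicitly contractible via the retraction of $\lambda_\tau$ in $\on{Path}(M)$ inside the image of $\on{Loop}^0$. Hence $\pi_1(\sz^{-1}(m))=0$.

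\emph{Main obstacle.} The bookkeeping in Step 2—chiefly, checking that the quotient by the infinite-dimensional $\on{Loop}^0(M)$ produces an honest finite-dimensional smooth manifold with smooth structure maps—is the most delicate part. Once the bundle picture $G=\Pi(M)\times_M\mathsf{U}$ is established and smoothness is transferred from $\Pi(M)$ and $\mathsf{U}$, Steps 3 and 4 are conceptually clear; it is the correct definition of smoothness on $G$ (and the verification that the identifications from $\on{Loop}^0$ are precisely accounted for by $\on{Hol}$) where care is required.
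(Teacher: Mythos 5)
Your plan follows essentially the same route as the paper: Step 1 is exactly Proposition \ref{prop:A1} of the appendix specialized to $f=\on{Hol}$, Step 2 is the paper's fiber-bundle-over-$\Pi(M)$ argument, and Step 4 matches the paper's contraction of loops in source fibers via Proposition \ref{prop:realize} (the paper modifies a given loop by a realized family $\lambda_\tau$ rather than invoking the homotopy exact sequence, but the content is the same). Two small corrections are in order. In Step 2, $G$ is \emph{not} canonically identified with $\Pi(M)\times_M\mathsf{U}$ as a set: the fiber of $G\to\Pi(M)$ over $[\gamma]$ is only a $\mathsf{U}_m$-torsor, and identifying it with $\mathsf{U}_m$ requires choosing a thin-homotopy representative of $[\gamma]$; the point of the paper's local sections $\sigma_\nu\colon \sI\times O_\nu\to M$ (smooth families of paths realizing local sections of $\Pi(M)$) is to make these choices smoothly in charts, and the resulting transition functions $\Ad_{f}$ with $f=\on{Hol}(\sigma_{\nu_2}^{-1}\ast\sigma_{\nu_1})$ are genuinely non-trivial --- their smoothness is the substance of the step, so the claimed global canonical identification should be dropped. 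In Step 4 the surjection from the homotopy sequence goes the other way: since the source fiber of $\Pi(M)$ at $m$ is simply connected, it is $\pi_1(\mathsf{U}_m)\to\pi_1(\sz^{-1}(m))$ that is onto, and one must kill its image --- which is what your argument then (correctly) does. Finally, your Step 3 makes explicit a verification ($\on{Lie}(G)\cong A$) that the paper leaves implicit; it is worth carrying out, and your checklist (bracket on $\h$, mixed bracket via $\nabla$, curvature of the two splittings) is the right one, though the identification of the curvature of the constructed groupoid's splitting with $R$ of \eqref{eq:r} does require an honest computation with the parallel transport defining the $\on{Path}(M)$-action.
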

\begin{proof}
The properties of $\Hol$, as described in Proposition \ref{prop:hol}, ensure that the map \eqref{eq:embedding}
is an inclusion of the group bundle $\on{Loop}^0(M)$ as a normal subgroupoid of the semi-direct product. 
See Proposition \ref{prop:A1} in Appendix \ref{app:A} for details. Hence, the quotient 
by this normal subgroupoid is a well-defined groupoid. We have the commutative diagram,  
\[\begin{tikzcd}[column sep=huge]
	\on{Path}(M)\ltimes \mathsf{U}\arrow[r, "/ \on{Loop}^0(M) "'] \arrow[d]
	& G
	 \arrow[d] \\
	\on{Path}(M) \arrow[r, "/ \on{Loop}^0(M)"']
	& \Pi(M)
\end{tikzcd}\]

\noindent {\bf Smoothness.} 
We will give a direct description  (without reference to the diffeology of $\on{Path}(M)$) of the manifold structure on $G$. 
In fact, we will show that $G$ is a locally trivial fiber bundle over $\Pi(M)$. 
Choose a covering of $\Pi(M)$ by open subsets $\ti{O}_\nu$ on which the quotient map 
$\Pi(M)\to \on{Pair}(M)$ restricts to 
diffeomorphisms to open subsets $O_\nu\subset \on{Pair}(M)$, and in such a way that intersections 
$\ti{O}_{\nu_1}\cap \ti{O}_{\nu_2}$ map diffeomorphically onto $O_{\nu_1}\cap O_{\nu_2}$. 
By definition of the fundamental groupoid, the section $O_\nu\to \ti{O}_\nu$ may be realized 
by a smooth family of paths, i.e., by a smooth map  
\[ \sigma_\nu\colon \sI\times O_\nu\to M\]
such that each $\sigma_\nu(\cdot ,m',m)$ is a path, with sitting end points, from $m$ to $m'$, 
with homotopy class $[\sigma_\nu(\cdot,m',m)]$ contained in $\ti{O}_\nu$. The choice of $\sigma_\nu$  determines
a section 
\[ [\sigma_\nu ]_{\on{thin}}\colon O_\nu\to \on{Path}(M),\ \ (m',m)\mapsto [\sigma_\nu(\cdot,m',m)]_{\on{thin}}\]
of $\on{Path}(M)\to \on{Pair}(M)$ over $O_\nu$:
\[
\begin{tikzcd}
\on{Path}(M) \arrow[r] 
& \wt{O}_\nu \\
& O_\nu \arrow[ul,  " {[\sigma_\nu]_{\on{thin}}}  "] \arrow[u, "\cong" ,"{[\sigma_\nu]}"']
\end{tikzcd}\]
The composition of $[\sigma_\nu]_{\on{thin}}$ with the quotient map $\ti{O}_\nu\to O_\nu$
defines a section of $\on{Path}(M)\to \Pi(M)$, and  gives a local trivialization of $G$ as a group bundle over $\Pi(M)$:   
\[  F_\nu\colon O_\nu\times_M \mathsf{U}\to G|_{\wt{O}_\nu},\ (m',m,u)\mapsto \big[([\sigma_\nu(\cdot,m',m)]_{\on{thin}};u)\big].\]
Suppose $\ti{O}_{\nu_1}\cap \ti{O}_{\nu_2}$ is non-empty. Let 
 $\sigma_{\nu_2}^{-1}\ast\sigma_{\nu_1}\colon \sI\times (O_{\nu_1}\cap O_{\nu_2})\to M$
be the family of contractible loops given by concatenation of paths, 
\begin{center}
	\resizebox{4cm}{!}{
		\begin{tikzpicture}	\begin{scope}[decoration={
			markings,
			mark=at position 0.4 with {\arrow{>}}}
		] 
		\draw[postaction={decorate}]  [fill=gray!10]  (-2,-2) .. controls (-1,-1.5) and (1,-1)..   (2,2)
		node[below] at (0.7,-0.7) {$\sigma_{\nu_1}$}
		;
		\draw[postaction={decorate}] [fill=gray!10]  (-2,-2) .. controls (-1.5,-0.2) and (0,1.5)  .. (2,2) 
		node[above] at (-0.7,0.7){ $\sigma_{\nu_2}$}
		;
		\coordinate [label=center:{$m$}]  (h2) at (-2.2,-2.2);
		\coordinate [label=center:{$m'$}]  (h1) at (2.3,2.2);
		\end{scope}\end{tikzpicture}
	}
\end{center}
and let 
\[ f=\Hol(\sigma_{\nu_2} ^{-1}\ast \sigma_{\nu_1})\colon O_{\nu_1}\cap O_{\nu_2}\to \mathsf{U}.\]
Then the two trivializations of $G|_{\ti{O}_{\nu_1}\cap \ti{O}_{\nu_2}}$ are related by the transition map
\[
F_{\nu_2}^{-1}\circ F_{\nu_1}\colon 
(O_{\nu_1}\cap O_{\nu_2})\times_M \mathsf{U}\to (O_{\nu_1}\cap O_{\nu_2})\times_M \mathsf{U},\ \ (m',m,u)\mapsto (m',m,\Ad_{f(m',m)}u).\]
Since $f$ is a smooth function, it follows that the transition map is smooth.  \medskip

\noindent{\bf Source-simply connectedness.}
Consider a based loop (with sitting instances)
\begin{equation}\label{eq:loopofloops} \sI\to \sz^{-1}(m)\subset G,\ \  \tau\mapsto [([\gamma_\tau]_{\on{thin}},u_\tau)]\end{equation}
in a given source fiber. It is represented by  a pair of smooth maps (with sitting instances)
\[ \gamma\colon \sI^2\to \sz^{-1}(m)\subset M,\ (\tau,t)\mapsto \gamma_\tau(t),\ \ \ u\colon \sI\to \mathsf{U}_m,\ \tau\mapsto u_\tau\]
such that 
	\begin{itemize}
		\item   $\gamma_0=\gamma_1$ the constant path at $m$,
		\item  $u_0=u_1=e$. 
	\end{itemize}

 The based loop $\sI\to \mathsf{U}_m,\ \tau\mapsto u_\tau$ defines an element  $c\in \pi_1(\mathsf{U}_m)=\Lambda_m$. By definition of the monodromy groups, this element is realized as $c=\delta_A([\sigma])$ for a smooth map $\sigma\colon \sI^2\to M$, sending a neighborhood of $\partial \sI^2$ to $m$. Consider the family of based loops in $M$, 
 $\lambda_\tau(s)=\sigma(s,\tau)$,  starting and ending with the constant loop at $m$. 
 As shown in Proposition \ref{prop:realize}, the based loop 
   \[ \sI\to \mathsf{U}_m, \tau\mapsto \Hol(\lambda_\tau)\] 
represents $c\in \pi_1(\mathsf{U}_m,e)$. 
Define $\wt{\gamma}_\tau,\ \wt{u}_\tau$ by 
	\[ u_\tau=\Hol(\lambda_\tau) \wt{u}_\tau,\ \ \ \gamma_\tau\ast \lambda_\tau=\wt{\gamma}_\tau.\]
Then $\tau\mapsto \wt{u}_\tau$ in $\mathsf{U}_m$ represents the trivial element of $\pi_1(\mathsf{U}_m,e)$, i.e., it is contractible. 
We claim 
\[  [([\gamma_\tau]_{\on{thin}},u_\tau)] = [([\ti{\gamma}_\tau]_{\on{thin}},\ti{u}_\tau)]	 .\] 
Indeed, it is clear that  $\gamma_\tau\simeq \wt{\gamma}_\tau$ since the $\lambda_\tau$ are contractible loops, while on the other hand, 
\[ \Hol(\wt{\gamma}_\tau^{-1}\ast \gamma_\tau)\ u_\tau=
\Hol(\lambda_\tau^{-1})\ u_\tau=\Hol(\lambda_\tau)^{-1}u_\tau=\ti{u}_\tau.\]
In conclusion, we can choose the representatives in \eqref{eq:loopofloops} in such a way that the loop $\tau\mapsto u_\tau$ in $\mathsf{U}_m$ is contractible. Having made such a choice, let us pick a 
smooth  homotopy $\sI^2\to \mathsf{U}_m,\ (\tau,r)\mapsto {u}_\tau^r$ between $u_\tau^1=u_\tau$ 
and the constant loop $u_\tau^0=e$, and put ${\gamma}_\tau^r(t)={\gamma}_\tau(rt)$. Then the map
\[ \sI^2\to \sz^{-1}(m)\subset G,\ (\tau,r)\mapsto 	 [([{\gamma}^r_\tau]_{\on{thin}},{u}^r_\tau)] \]
gives the desired retraction of \eqref{eq:loopofloops}.
\end{proof}

\begin{remarks}
	\begin{enumerate}
		\item From the definition of the groupoid $G\rra M$, we obtain a description of its isotropy groups as
		\[ H_m=\on{Loop}(M)_m\times_{\on{Loop}^0(M)} U_m.\]
		They fit into an exact sequence
		\[ 1\to \mathsf{U}_m\to H_m\to \pi_1(M,m)\to 1.\]
		In particular, $\mathsf{U}_m$ are the identity components of the isotropy groups. 
		\item 
		Using results about diffeologies, the proof of smoothness of $G$ can be made a bit shorter: In \cite{col:par}, it is shown that if $\ca{K}$ is a diffeological group acting on a finite-dimensional manifold $Q$, and 
		$\ca{P}\to M$ is a diffeological principal $\ca{K}$-bundle over a finite-dimensional manifold $M$, 
		then the associated bundle $\ca{P}\times_{\ca{K}}Q\to M$ is a smooth finite-dimensional fiber bundle. 
		In our context, fix a base point $m\in M$, let $\ca{K}=\on{Loop}^0(M)_{m}$, and 
		$\ca{P}=\on{Path}(M,m)$ the thin homotopy classes of paths with source $m$.  Then 
		\[ \on{Path}(M,m)\times_{\on{Loop}^0(M)_{m}} \sU_{m}\]
		is a manifold, and is a principal $H_{m}=\on{Loop}(M)_{m}\times_{\on{Loop}^0(M)_{m}} \sU_{m}$ bundle. The groupoid $G$ is its gauge groupoid. 
		\item If the monodromy groups are \emph{not} discrete, then the groups $\sU_m$ are still defined, but are not 
		Lie groups.  One way of dealing with this situation is to employ the theory of stacks, and consider $\sU_m$ as a `stacky Lie group'. It was shown by Tseng and Zhu \cite{tse:int} that the topological Lie groupoid $G\rra M$ associated to \emph{any} Lie algebroid has the structure of a `stacky Lie groupoid'. In another direction, Androulidakis-Antonini \cite{and:int} show that any transitive Lie groupoid with non-discrete monodromy groups admits a canonical lift 
		to a transitive Lie algebroid over a new manifold, in such a way that the monodromy groups do become discrete. 
	\end{enumerate}
\end{remarks}

\section{Regular Lie algebroids}
A Lie algebroid $A\Ra M$ is called \emph{regular} if the anchor map $\a_A$ has constant rank. 
Hence, the range of the anchor map defines a regular (as opposed to singular) foliation $\ca{F}$ of $M$. 
Two extreme cases of regular Lie algebroids are (i) the tangent bundle of any regular foliation $\F$, as a Lie algebroid $T_\F M\Ra M$ with anchor the inclusion to $TM$, (ii) a family of Lie algebras $\h_m$ labeled by the points of $M$, 
regarded as a Lie algebroid $\h\Ra M$ with the zero anchor. Both extreme cases admit source-simply connected integrations, however, the integrating Lie groupoids will often be non-Hausdorff.  
The source-simply connected integration of $T_\F M$ is the \emph{monodromy groupoid} $\Pi_\F(M)\rra M$.  Its arrows are homotopy classes of \emph{$\ca{F}$-paths}, that is, paths inside leaves of the foliation, modulo homotopies of such paths. Here, the non-Hausdorff phenomena arise when the foliation is not a fibration. (For example, 
it could happen that a loop $\gamma_0$ in a given leaf is not contractible, but may be approached through contractible loops $\gamma_\epsilon,\ \epsilon>0$ in nearby leaves. Then $[\gamma_\epsilon]\in M\subset T_\F M$  for $\epsilon>0$, with a limit $[\gamma_0]\not\in M$.) Similarly, a family of Lie algebras $\h_m$ integrates to the family of simply connected Lie groups $\wt{H}_m$, regarded as a Lie groupoid $\wt{H}\rra M$. 
See Douady-Lazard \cite{dou:esp} for a careful construction of the manifold structure on $\wt{H}$, as well as for the following instructive example. 

\begin{example} \label{ex:douady} \cite{dou:esp} 
Let $\h\Ra M$ be the  family of Lie algebras $\h_s\cong\R^3,\ s\in \R$ with brackets 
\[ [x,y]=sz,\ [z,x]=y,\ [z,y]=-x.\]
These Lie algebras are isomorphic to $\mf{su}(2)$ for $s>0$, to  $\R\ltimes \R^2$ (using the action by infinitesimal rotations) for $s=0$, and to $\mf{sl}(2,\R)$ for $s<0$. The corresponding simply connected Lie groups $\wt{H}_s$ are thus $\SU(2)$ for $s>0$, $\R\ltimes \R^2$ for $s=0$, and $\on{SL}(2,\R)$ for $s<0$. Their union is a groupoid $\wt{H}\rra M$ integrating 
$\h\Ra M$, but it is not Hausdorff. 
 Indeed, 
\[ \gamma\colon s\mapsto \exp_s(4\pi z)\in \wt{H}_s\] 
is a curve with the property $\gamma(s)=e_s$ (the unit in $\wt{H}_s$) for $s>0$, but $\gamma(0)\not=e_0$. 
On the other hand, it is shown in \cite{dou:esp} that there \emph{does} exist a Hausdorff integration $H\rra M$,   with 
fibers $H_s$ given by $\SO(3)$ for $s>0$, by $\on{SO}(2)\ltimes \R^2$ for $s=0$, and $\on{SO}(2,1)$ for $s<0$. 
\end{example}

For a general regular Lie algebroid $A\Ra M$, with underlying foliation $\ca{F}$, we have the exact sequence of Lie algebroids
\begin{equation}\label{eq:exact2} 0\to \h\to A\to T_\F M\to 0.\end{equation}
Recall that the monodromy group $\Lambda_m$ of $A$ at $m\in M$ is the monodromy group of the transitive Lie algebroid $A_\Sigma \Ra \Sigma$, where $\Sigma$ is the leaf of $\ca{F}$ through $m$. Let \[\Lambda=\bigcup_{m\in M}\Lambda_m\subset \wt{H}.\] 

 %, and let  $\exp^{-1}\Lambda\subset \h$ be the pre-image of 
 %the union of monodromy groups. 
Using the definition of the monodromy groups, one sees that $\Lambda$ is an immersed submanifold.  (Elements of $\Lambda_m$ are realized 
as $c(\varphi^! A)\in Z_m\subset \wt{H}_m$ for base point preserving maps $\varphi\colon (S^2,m_0)\to (M,m)$ taking values in leaves;  
changing $\varphi$ smoothly will lead to a smooth change of $c(\varphi^! A)$.) However, in general, $\Lambda$ need not be a closed submanifold, even if all monodromy groups are discrete. 

\begin{example}
Let $A^{(\lambda)}\Ra S^2$ be the Lie algebroid with structure Lie algebra $\R$ 
and monodromy group $\Lambda^{(\lambda)}=\lambda\Z\subset \R$, as in 
Example \ref{ex:r}. Then $A=\bigcup_\lambda A^{(\lambda)}\Ra S^2\times \R$ is a regular Lie algebroid.  We have 	
\[ \Lambda=\bigcup_{\lambda\in \R}\lambda \Z=\{(\lambda,\lambda k)|\ \lambda\in\R,\ k\in \Z\},\]
which is not a closed submanifold. 
\end{example}

\begin{definition} \cite[Section 4.1]{cra:lect}
	The monodromy groups $\Lambda_m$ of a regular Lie algebroid $A\Ra M$  are \emph{locally uniformly discrete} if there exists an open neighborhood of 
	$M$ inside $\wt{H}$, intersecting $\Lambda$ only in $M$. 
\end{definition}

\begin{remark}
For general Lie algebroids, the definition of \emph{locally uniformly discrete} is a bit more involved, since 
the union of $\wt{H}_m$ need not have a natural manifold structure. Instead, Crainic-Fernandes \cite{cra:intlie,cra:lect} 
work with the subset $\Lambda^0\subset A$ given as the union of pre-images of monodromy groups under the 
exponential maps. 
\end{remark}

%\begin{lemma}\cite{cra:lect}For any regular Lie algebroid $A\Ra M$, the monodromy groups are locally uniformly discrete if and only if  $\Lambda$ is a closed submanifold (hence, a closed Lie subgroupoid) of $\wt{H}\rra M$. \end{lemma}

\begin{proposition}
A necessary condition for integrability of a 	regular Lie algebroid
$A\Ra M$  to a (possibly non-Hausdorff) Lie groupoid $G\rra M$ is that the monodromy groups  $\Lambda_m$ are locally uniformly discrete. 
\end{proposition}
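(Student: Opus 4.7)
The plan is to adapt the argument of Proposition \ref{prop:necessary} leafwise, and then promote pointwise discreteness to local uniform discreteness by means of the Lie groupoid exponential map.

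Suppose $A \Ra M$ is integrable to a (possibly non-Hausdorff) Lie groupoid $G \rra M$; replacing $G$ by its source-connected open subgroupoid, I may assume $G$ is source-connected. The isotropy subgroupoid $\mathsf{H} := \ker(\a_G) \subset G$ is then a smooth (possibly non-Hausdorff) family of Lie groups with $\on{Lie}(\mathsf{H}) = \h$; denote by $\mathsf{H}^0 \subset \mathsf{H}$ the open subgroupoid formed by the fiberwise identity components, which is also an integration of $\h \Ra M$. Over each leaf $\Sigma \subset M$ containing a point $m$, the restriction $G_\Sigma$ is a transitive Lie groupoid integrating $A_\Sigma$, and Proposition \ref{prop:necessary} applied to $A_\Sigma$ yields
\[
\Lambda_m \subset \pi_1(\mathsf{H}_m, e_m) = \pi_1(\mathsf{H}_m^0, e_m),
\]
which is precisely the kernel of the fiberwise universal covering homomorphism $p \colon \wt{H} \to \mathsf{H}^0$.

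The next step is to trivialize this covering on a neighborhood of $M$ via the Lie groupoid exponential. The map $\exp_G \colon A \supseteq U_0 \to G$, defined on an open neighborhood of the zero section, is a diffeomorphism onto a neighborhood of $M$ in $G$; after shrinking $U_0$ I may arrange that $\exp_G$ sends $U_0 \cap \h$ into $\mathsf{H}^0$. The fiberwise Lie group exponential $\exp_{\wt{H}} \colon \h \to \wt{H}$ is a local diffeomorphism near the zero section, using that in the regular case $\wt{H} \to M$ carries a natural (possibly non-Hausdorff) smooth structure by Douady--Lazard \cite{dou:esp} applied to the Lie algebra bundle $\h$. Naturality of the exponential with respect to the covering gives
\[
p \circ \exp_{\wt{H}} = \exp_G \big|_{U_0 \cap \h}
\]
on a neighborhood of the zero section. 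After a further shrinking of $U_0$, the restriction of $\exp_{\wt{H}}$ is a diffeomorphism from $U_0 \cap \h$ onto an open set $V \subset \wt{H}$ containing $M$, and $p|_V$ is a diffeomorphism onto its image.

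To conclude: if $\lambda \in V \cap \Lambda$, say $\lambda \in \Lambda_m$, then $p(\lambda) = e_m$ by the inclusion $\Lambda_m \subset \ker(p_m)$ from the first paragraph, while $\wt{e}_m \in V_m$ also satisfies $p(\wt{e}_m) = e_m$; injectivity of $p|_V$ forces $\lambda = \wt{e}_m \in M$. Hence $V \cap \Lambda = M$, establishing local uniform discreteness. The main technical point is the existence of the smooth structure on $\wt{H}$ and the lift of the groupoid exponential to it; in the regular setting both rest on the Douady--Lazard construction applied to the locally trivial Lie algebra bundle $\h$, so the matter is standard rather than genuinely difficult—in the general (non-regular) case, this is exactly the reason why one has to work instead with the subset $\Lambda^0 \subset A$ as in the subsequent remark.
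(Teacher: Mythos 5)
Your proof is correct and follows essentially the same route as the paper's: reduce to the leafwise inclusion $\Lambda_m\subset \pi_1(H_m,e)=\ker(\wt{H}_m\to H_m)$ via the transitive-case proposition, and then use the fiberwise exponentials of $\h$ into $\wt{H}$ and into $H$ (equivalently, local injectivity of the covering $p$ on a uniform neighborhood of the unit section) to upgrade this to local uniform discreteness. The paper phrases the last step as $\exp_{\wt{H}}^{-1}(\Lambda)\subset \exp_H^{-1}(M)$ intersected with a neighborhood on which $\exp_H$ is a diffeomorphism, which is the same bookkeeping as your injectivity of $p|_V$.
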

\begin{proof}
Given an integration $G\rra M$, let $H=\ker(\a_G)$ be the bundle of isotropy groups.  Since the groupoid anchor $\a_G\colon G\to \on{Pair}(M)$ has constant rank, 
this is a closed Lie subgroupoid  of  $G$. Let $\exp_{\wt{H}}\colon \h\to \wt{H}$ and $\exp_H\colon \h\to H$
be  the fiberwise exponential maps. 
%family of exponential maps $\h_m\to \wt{H}_m$ and $\h_m\to H_m$, respectively. 
 For all $m\in M$, we have that $\pi_1(H_m)\supset \Lambda_m$, hence 
 $\exp_{H_m}^{-1}(e)=\exp_{\wt{H}_m}^{-1}(\pi_1(H_m))\supset \exp_{\wt{H}_m}^{-1}(\Lambda_m)$. This shows 
 \[ \exp_{\wt{H}}^{-1}(\Lambda)\subset \exp_H^{-1}(M).\]
where $\exp_H^{-1}(M)$ is the pre-image of the unit section $M\subset H$. Let $O\subset \h$ be an open neighborhood of the zero section $M$ over which $\exp_H\colon \h\to H$ 
restricts to a diffeomorphism. Then  $M\subset \exp_{\wt{H}}^{-1}(\Lambda)\cap O\subset
\exp_H^{-1}(M)\cap O=M$. Consequently, $\exp_{\wt{H}}^{-1}(\Lambda)\cap O=M\subset \h$, and hence $\Lambda\cap \exp_{\wt{H}}(O)=M\subset \wt{H}$.
\end{proof}

By the Crainic-Fernandes theorem \cite{cra:intlie}, %specialized to regular Lie algebroids, 
the uniform discreteness of monodromy groups is also sufficient for the existence of an integration. (Under additional assumptions, 
the integration problem for regular Lie algebroids was considered in the work of Dazord-Hector \cite{daz:int3} and Nistor \cite{nis:gro}.) 
Let us sketch how to recover this result from the construction in the previous sections, carried out leafwise. 

Suppose that the monodromy groups are uniformly discrete. Then $\Lambda\subset \wt{H}$ is a closed submanifold, and hence a closed Lie subgroupoid. Since the subgroups $\Lambda_m$ are contained 
in the center of  $\wt{H}_m$, we may take the fiberwise quotients $\mathsf{U}_m=\wt{H}_m/\Lambda_m$ resulting in a (not necessarily Hausdorff) family of Lie groups 
\[ \mathsf{U}=\wt{H}/\Lambda.\]
Consider the infinite-dimensional groupoid $\on{Path}_{\ca{F}}(M)\rra M$, consisting of thin homotopy classes of $\ca{F}$-paths. 
The choice of a splitting of the exact sequence 
\eqref{eq:exact2} defines an action of $\on{Path}_{\ca{F}}(M)\rra M$ on $\mathsf{U}$, compatible with the fiberwise group structure, hence we can form the semi-direct product
\[ \on{Path}_{\ca{F}}(M)\ltimes \mathsf{U}\rra M.\]
The bundle of isotropy groups $\on{Loop}_{\ca{F}}(M)\subset \on{Path}_\F(M)$ has a subbundle 
$\on{Loop}^0_{\ca{F}}(M)$ of contractible loops, and the source-simply connected Lie groupoid integrating $A\Ra M$ is given by the quotient 
\[ G=(\on{Path}_{\ca{F}}(M)\ltimes \mathsf{U})/\on{Loop}^0_{\ca{F}}(M).\]
To understand the manifold structure of $G$, one may proceed as in the proof of Theorem \ref{th:explicitint}, by regarding $\on{Path}_{\ca{F}}(M)$ as a principal $\on{Loop}^0_{\ca{F}}(M)$-bundle over $\Pi_\F(M)$. 
 The standard construction of the smooth structure of  $\Pi_\F(M)$ gives 
local trivializations of this principal bundle, and identifies its associated bundle $G$ 
locally with $\Pi_\F(M)\times_M \mathsf{U}$. The smooth structure of the latter determines the smooth structure on $G$, and as as in the proof of Theorem \ref{th:explicitint} one finds that it does not depend on choices. Thus, 
$G$ is a well-defined (possibly non-Hausdorff) Lie groupoid integrating $A$.

\begin{appendix}
\section{Semi-direct product of groupoids}\label{app:A}
Let $\Gamma \rra M$ be a Lie groupoid, acting on a Lie group bundle $\mathsf{U}\to M$, with action denoted 
$\gamma\cdot u$ for  $\gamma\in \Gamma,\ u\in \mathsf{U}_{\sz(\gamma)}$. We assume that the action preserves the 
structure as a Lie group bundle, that is,
\[ \gamma\cdot (u_1u_0)=(\gamma\cdot u_1)\, (\gamma\cdot u_0).\]
Then we may form a  \emph{semi-direct product} groupoid  
\[ \Gamma\ltimes \mathsf{U}\rra M.\]
As a manifold, this is the fiber product of $\Gamma$ with $\mathsf{U}$ over $M$, with respect to the source map 
$\sz\colon \Gamma\to M$. The groupoid multiplication is 
\[ (\gamma',u')\circ (\gamma,u)=(\gamma'\circ \gamma,\ (\gamma^{-1}\cdot u')u).
\]
Suppose $L\subset \ker(a_\Gamma)$ is a group subbundle of the bundle of isotropy groups, which is 
\emph{normal} in the sense that $\gamma\circ \lambda\circ \gamma^{-1}\in L_{m'}$ for all 
$\lambda\in L_m$ and all  $\gamma\in \Gamma$ with  $\sz(\gamma)=m,\ \tz(\gamma)=m'$. Then the quotient of $\Gamma$ by $L$ is a Lie groupoid
\[ \Gamma/L\rra M,\] 
in such a way that the quotient map is a morphism of Lie groupoids. 
%(If $\Gamma$ is transitive, then so is $\Gamma/L$.) 
We are interested in a lift of such a quotient to the semi-direct product. Suppose that 
we are given a morphism of group bundles 
\[ f\colon L\to \mathsf{U}\]
with the following properties: 
\begin{enumerate}
\item For every $\lambda\in L_m$ and all $u\in \mathsf{U}_m$, 
\[ \lambda\cdot u	=f(\lambda) u f(\lambda)^{-1}.\]
\item 
For all $\gamma\in\Gamma$ with $\sz(\gamma)=m$, and all $\lambda\in L_m$
\[f(\gamma\circ \lambda\circ \gamma^{-1})=\gamma\cdot f(\lambda)\]
\end{enumerate}

\begin{proposition}\label{prop:A1}
For $f\colon L\to \mathsf{U}$ as above, the map 
\begin{equation}\label{eq:ltrain} L\to \Gamma\ltimes \mathsf{U},\ \ \lambda\mapsto (\lambda,f(\lambda^{-1}))\end{equation}
is an embedding as a normal subgroupoid of $\Gamma\ltimes \mathsf{U}$. 
\end{proposition}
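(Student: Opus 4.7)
The plan is to verify three things in sequence: (a) the map $\lambda \mapsto (\lambda, f(\lambda^{-1}))$ is a morphism of Lie groupoids over $M$, (b) it is a closed embedding, and (c) its image is a normal subgroupoid. Since the map has an obvious smooth left inverse (projection to the first factor) and $L\subset \Gamma$ is assumed to be a closed Lie subgroupoid, (b) will be automatic, and I would only record it in one line. The substance is in (a) and (c).

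For (a), given composable $\lambda,\lambda'\in L_m$, I would apply the semi-direct product multiplication \eqref{eq:product} to compute
\[ (\lambda',f(\lambda'^{-1}))\circ(\lambda,f(\lambda^{-1}))=\big(\lambda'\circ\lambda,\ (\lambda^{-1}\cdot f(\lambda'^{-1}))\,f(\lambda^{-1})\big).\]
Condition (i) on $f$ gives $\lambda^{-1}\cdot f(\lambda'^{-1})=f(\lambda^{-1})f(\lambda'^{-1})f(\lambda)$, so the second component collapses to $f(\lambda^{-1})f(\lambda'^{-1})=f(\lambda^{-1}\circ\lambda'^{-1})=f((\lambda'\circ\lambda)^{-1})$, using that $f$ is a group bundle morphism. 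Thus \eqref{eq:ltrain} is compatible with products, and hence (since it sends units to units) defines a groupoid morphism.

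For (c), I first compute inverses in the semi-direct product: from $(\gamma^{-1},v)\circ(\gamma,u)=(\sz(\gamma),(\gamma^{-1}\cdot v)u)$, one reads off $(\gamma,u)^{-1}=(\gamma^{-1},\gamma\cdot u^{-1})$. Take $\gamma\in\Gamma$ from $m$ to $m'$, $u\in\mathsf{U}_m$, and $\lambda\in L_m$, and set $\mu=\gamma\circ\lambda\circ\gamma^{-1}\in L_{m'}$ (which lies in $L$ by normality of $L$ in $\Gamma$). I would expand
\[ (\gamma,u)\circ(\lambda,f(\lambda^{-1}))\circ(\gamma,u)^{-1}\]
step by step using \eqref{eq:product}. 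The first component is $\mu$. For the second component, using that the $\Gamma$-action is by group bundle automorphisms and that $\gamma\cdot(\lambda^{-1}\cdot u)=\mu^{-1}\cdot(\gamma\cdot u)$, one reduces to the expression
\[ \big(\mu^{-1}\cdot (\gamma\cdot u)\big)\ \big(\gamma\cdot f(\lambda^{-1})\big)\ (\gamma\cdot u^{-1}).\]
Condition (ii) converts $\gamma\cdot f(\lambda^{-1})$ into $f(\mu^{-1})$, and condition (i) converts $\mu^{-1}\cdot(\gamma\cdot u)$ into $f(\mu^{-1})(\gamma\cdot u)f(\mu)$. The result telescopes to $f(\mu^{-1})$ after cancelling $(\gamma\cdot u)(\gamma\cdot u^{-1})=e$. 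Hence the conjugate is $(\mu,f(\mu^{-1}))$, which is the image of $\mu$ under \eqref{eq:ltrain}, establishing normality.

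The main obstacle is purely bookkeeping: carefully tracking sources and targets in $\Gamma\ltimes\mathsf{U}$, and orchestrating the two compatibility conditions on $f$ together with the equivariance identity $\gamma\cdot(\lambda\cdot u)=(\gamma\circ\lambda)\cdot u$ so that the intermediate terms cancel. Once the inverse formula and the commutation identity $\gamma\cdot(\lambda\cdot u)=\mu\cdot(\gamma\cdot u)$ with $\mu=\gamma\lambda\gamma^{-1}$ are isolated up front, both the multiplicativity in (a) and the normality calculation in (c) reduce to short algebraic manipulations of the kind indicated above.
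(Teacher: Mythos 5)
Your proposal is correct and follows essentially the same route as the paper: a direct verification of multiplicativity using condition (i), the same inverse formula $(\gamma,u)^{-1}=(\gamma^{-1},\gamma\cdot u^{-1})$, and the same conjugation computation for normality (the paper applies condition (i) first to simplify $(\gamma,u)\circ(\lambda,f(\lambda^{-1}))$ to $(\gamma\circ\lambda, f(\lambda^{-1})u)$ before acting by $\gamma$, whereas you push the $\gamma$-action through first and telescope at the end, but the two bookkeepings are equivalent). Your remark that the embedding property is immediate because the image is the graph of a smooth map over the closed subgroupoid $L$ is a point the paper leaves implicit.
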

\begin{proof}
The calculation, using property (a),  
\begin{align*} \big(\lambda',f((\lambda')^{-1}))\big)\circ \big(\lambda,f(\lambda^{-1})\big)&=\big(\lambda'\circ \lambda,\ 
\big(\lambda^{-1}\cdot f((\lambda')^{-1}))\big)\,	f(\lambda^{-1}) \big)\\
%&=(\lambda'\circ \lambda,\ f(\lambda^{-1}\circ (\lambda')^{-1}\circ \lambda)	f(\lambda^{-1}))\\
  &=\big(\lambda'\circ \lambda,\ 
 f(\lambda^{-1})f((\lambda' )^{-1})\big)\\
 &=\big(\lambda'\circ \lambda,\ 
 f( (\lambda'\circ\lambda )^{-1})\big)
\end{align*}
shows that \eqref{eq:ltrain} is a morphism of groupoids. To show that the image is a normal subgroupoid, 
we compute, using (a), 
\begin{align*}
(\gamma,u)\circ (\lambda,f(\lambda^{-1}))&=
\big(\gamma\circ \lambda,\ (\lambda^{-1}\cdot u)\,f(\lambda^{-1})\big)\\
&=\big(\gamma\circ \lambda,\ f(\lambda^{-1})\,u\big).
\end{align*}
Since $ (\gamma,u)^{-1}=(\gamma^{-1},\gamma\cdot u^{-1})$, it follows that 
\begin{align*}
(\gamma,u)\circ (\lambda,f(\lambda^{-1}))\circ (\gamma,u)^{-1}
&=\Big(\gamma\circ \lambda\circ\gamma^{-1},\ 
\big(\gamma\cdot\big( f(\lambda^{-1})\,u\big)\big)\,\big(\gamma\cdot u^{-1}\big)
\Big)\\
&=\Big(\gamma\circ \lambda\circ\gamma^{-1},\ \gamma\cdot f(\lambda^{-1})\Big)\\
&=\Big(\gamma\circ \lambda\circ\gamma^{-1},\ f(\gamma\circ \lambda^{-1}\circ \gamma^{-1})\Big)
\end{align*}
as required. 
\end{proof}
As a consequence, 
\begin{equation}\label{eq:G} (\Gamma\ltimes \mathsf{U})/L\rra M\end{equation}
is 	a well-defined Lie groupoid. 

\begin{remarks}
	\begin{enumerate}
		\item The projection $\Gamma\ltimes \mathsf{U}\to \Gamma$ descends to a morphism of Lie groupoids
		\[ (\Gamma\ltimes \mathsf{U})/L\to \Gamma/L.\]
		\item Thinking of $\Gamma$ as a principal $L$-bundle $p\colon \Gamma\to \Gamma/L$
		(or rather, family of principal $L_m$-bundles) 
		 over $\Gamma/L$, the groupoid \eqref{eq:G} is an associated group bundle, for the conjugation action of $L_m$ on $\mathsf{U}_m$  via $f\colon L_m\to \mathsf{U}_m$.
		\item The equivalence relation defining the quotient map $\Gamma\ltimes \mathsf{U}\to (\Gamma\ltimes \mathsf{U})/L$ is explicitly given by 
		\[ (\gamma_1,u_1)\sim (\gamma_0,u_0)\ \Leftrightarrow p(\gamma_1)=p(\gamma_0),\ 
		u_1=f(\gamma_1^{-1}\gamma_0)u_0.\]	
	\end{enumerate}
\end{remarks}
\end{appendix}

 \bibliographystyle{amsplain} 
 %\bibliography{../../../Biblio/ref}

\def\cprime{$'$} \def\polhk#1{\setbox0=\hbox{#1}{\ooalign{\hidewidth
			\lower1.5ex\hbox{`}\hidewidth\crcr\unhbox0}}} \def\cprime{$'$}
\def\cprime{$'$} \def\cprime{$'$} \def\cprime{$'$} \def\cprime{$'$}
\def\polhk#1{\setbox0=\hbox{#1}{\ooalign{\hidewidth
			\lower1.5ex\hbox{`}\hidewidth\crcr\unhbox0}}} \def\cprime{$'$}
\def\cprime{$'$} \def\cprime{$'$} \def\cprime{$'$} \def\cprime{$'$}
\providecommand{\bysame}{\leavevmode\hbox to3em{\hrulefill}\thinspace}
\providecommand{\MR}{\relax\ifhmode\unskip\space\fi MR }
% \MRhref is called by the amsart/book/proc definition of \MR.
\providecommand{\MRhref}[2]{%
	\href{http://www.ams.org/mathscinet-getitem?mr=#1}{#2}
}
\providecommand{\href}[2]{#2}

\end{document}